\documentclass[a4paper,11pt]{article}
\usepackage{mathrsfs,amscd}
\usepackage{amssymb}
\usepackage{amssymb, amsmath, mathrsfs}
\usepackage{amscd}
\usepackage{latexsym}
\usepackage{amssymb}
\usepackage{amsmath}
\usepackage{amsthm}
\usepackage{indentfirst}
\usepackage{pifont}
\usepackage{graphicx}
\usepackage{subfig}

\numberwithin{equation}{section}


\newtheorem{theorem}{Theorem}[section]
\newtheorem{lemma}[theorem]{Lemma}

\theoremstyle{definition}

\newtheorem{proposition}[theorem]{Proposition}

\theoremstyle{remark}

\hyphenation{con-firm} \textheight24cm
\setlength{\textwidth}{15.7cm} \addtolength{\topmargin}{-1.9cm}
\addtolength{\oddsidemargin}{-1.2cm}

\numberwithin{equation}{section}
\allowdisplaybreaks

\begin{document}

\title{ Vanishing viscosity of  one-dimensional isentropic  Navier-Stokes equations with density
dependent viscous coefficient  \footnotetext{This work is  supported by the NSFC grant 11331005, 11671319.\newline
\ \ \ School of Mathematics and CNS, Northwest University, Xi'an 710069, China.(mycui2004@163.com)
}}

\author{
Meiying Cui
}

\date{}
\maketitle

\noindent{\bf Abstract:}
In this paper, we study the vanishing viscosity of the isentropic compressible Navier-Stokes equations with density
dependent viscous coefficient in the presence of the shock wave. Given a shock wave  to the corresponding Euler equations, we can construct a sequence of solutions to one-dimensional compressible isentropic
Navier-Stokes equations which converge to the shock wave as the viscosity
tends to zero.  The proof is given by an elementary energy method.

\vspace{0.15cm}

\noindent{\bf Keywords}: Vanishing viscosity limit; Compressible isentropic Navier-Stokes equations; Euler equations; Shock wave.

\vspace{0.15cm}

\noindent{\bf AMS subject classifications:} {\ \ 35Q30,\ 74J40,\ 76N10}

\section{Introduction}

The Navier-Stokes equations for barotropic compressible
viscous fluids read
\begin{equation}\label{01}
\left\{\begin{array}{l}
\rho_t  - \text{div}(\rho U)  =0,\\[2mm]
(\rho U)_t  + \text{div}(\rho U \otimes U)-\text{div}(h(\rho)D(U))-\nabla (g(\rho)\text{div} U)+\nabla p(\rho)=0,
\end{array}\right.
\end{equation}
where $x \in R^{N},  t\in (0,+\infty),$ $\rho(x,t),U(x,t)$ and $p(\rho)=\rho^{\gamma},(\gamma\geq1)$ are
 the fluid density, velocity, and pressure, respectively,
\begin{equation}\label{02}
D(U)=\dfrac{\nabla U+^{t}\nabla U}{2}
\end{equation}
is the strain tensor, and $h(\rho)$ and $g(\rho)$ are the Lam\'{e} viscosity coefficients satisfying
\begin{equation}\label{03}
h(\rho)>0, h(\rho)+N g(\rho)\geq 0.
\end{equation}

In the past decades, many signiffcant progress have
been made on the study of the equations (1.1), for exemaple \cite{bd2,bdl,bd3,f2004,g0,h0,jz2003,ks0,l1998,mn0,mv0,s0} and the
references cited therein. In particular, when dealing with vanishing viscosity coefficients on
vacuum, the main difficulty is that the velocity can not even be defined when the density vanishes.
The
first multidimensional result is due to Bresch-Desjardins-Lin \cite{bdl}, where they
showed the $L_1$ stability of weak solutions for the Korteweg system (with the Korteweg
stress tensor $k\rho\nabla\triangle\rho$), and their result was later improved in \cite{bd2} to include the
case of vanishing capillarity ($k=0$) but with an additional quadratic friction term
$r\rho|U|U$. An interesting new entropy estimate is established in \cite{bd3} in an priori way,
which provides some high regularity for the density.  Mellet-Vasseur
\cite{mv0} proved the $L_1$ stability of weak solutions of the system of (1.1) based on
the new entropy estimate, extending the corresponding $L_1$ stability results of \cite{bd2,bd3}
to the case $r = k = 0$. If $N=2,3,$  taking
\begin{equation}\label{04}
h(\rho)=\rho, \ \ \ g(\rho)=0,
\end{equation}
 Guo-Jiu-Xin \cite{g0} proved the existence of global weak solutions to
(1.1) when the initial data are large and spherically
symmetric by constructing suitable aproximate solutions. Their analysis in \cite{g0} also  applied to the more general case when
\begin{equation}\label{06}
g(\rho)= \rho h'(\rho)-h(\rho).
\end{equation}

In this paper, setting \eqref{06} holds,
we study the vanishing viscosity limit of the solution of the one-dimensional compressible Navier-Stokes equations (1.1) with
\begin{equation}\label{07}
h(\rho)=\rho^{\alpha}, \ \ \ g(\rho)=(\alpha-1)\rho^{\alpha}, \ \ \alpha>0.
\end{equation}
For convenience, we investigate this problem in the Lagrangian coordinates, i.e.,
we study the vanishing viscosity limit of the solution of the 1-D compressible isentropic Navier-Stokes equations with density
dependent viscous coefficient
\begin{equation}\label{11.1}
\left\{\begin{array}{l}
v_t  - u_x  =0,\\[2mm]
u_t  +  p(v)_x =\Big(\dfrac{\mu(v)}{v}u_x\Big)_x,
\end{array}\right. \quad x \in R,\ \  t\geq0,
\end{equation}
where    $v(x,t)=\frac{1}{\rho}>0,u(x,t)$  represent the specific volume and velocity, respectively, and the viscosity coefficient  $\mu(v)=h(v)+g(v)=\alpha v^{-\alpha}, \alpha>0$.  The pressure $ p=p(v)$ is assumed to satisfy
\begin{equation}\label{11.2}
p'(v)<0<p''(v),\  \ \ v>0.
\end{equation}
There is a large literature on mathematical studies of \eqref{11.1} with various
initial and boundary conditions. If the initial density is supposed to be connected
to vacuum with discontinuities, the local
well-posedness of weak solutions was first obtained by Liu-Xin-Yang  \cite{lxy}. Later,
the global well-posedness was obtained  in \cite{jxz,omm,yyz}. The case of initial density connecting to vacuum continuously was investigated in \cite{fz,vyz,yz,yzhu}.
Li-Li-Xin  \cite{llx} identified and analyzed the phenomena of vacuum vanishing and blow-up of
solutions to the initial-boundary-value of \eqref{11.1} with $\alpha>1/2$.

Formally, as  $\alpha \rightarrow 0$, the system \eqref{11.1} becomes the Euler equations
\begin{equation}\label{11.3}
\left\{\begin{array}{l}
v_t  - u_x  =0,\\
u_t  +  p(v)_x =0,
\end{array}\right.
\end{equation}
It is well-known that with the  initial data
\begin{equation}\label{1.4}
(v,u)(x, 0)=\left\{\begin{array}{ll}
  (v_-,u_-),&x<0,\cr\noalign{\vskip1truemm}
  (v_+,u_+), &x>0,
 \end{array}\right.
\end{equation}
where $(v_\pm,u_\pm)$ are constants, the solutions of the  Riemann problem \eqref{11.3},  \eqref{1.4} contain shock waves, rarefaction waves and vacuum states \cite{Smoller}.

The vanishing viscosity limit of solutions to compressible  Naiver-Stokes equations to those of  corresponding Euler equations has been an open and challenging problem in the theory of compressible fluid. The main difficulty  comes from the singularity, such as shock wave and vacuum, in the solutions to the Euler equations. When the viscosity coefficient is taken to be a constant, for the compressible isentropic Navier-Stokes equations, Hoff-Liu \cite{hl} first
proved the vanishing viscosity limit for piecewise constant shock even with initial
layer.   Xin \cite{xzp} investigated the vanishing viscosity limit for rarefaction waves without
vacuum  and obtained a convergence rate to the case without initial layer.   When the far field of the initial values
of Euler system \eqref{11.3} has no vacuums, Chen-Perepelitsa \cite{cp} proved the vanishing viscosity limit of isentropic Navier-Stokes equations by compensated compactness method.  Huang et al. \cite{hwwy} justified the limit to the case of the interaction of two different family of shocks. For the superposition of two shock waves, we also refer to \cite{zpt}. Recently, Huang-Li-Wang \cite{hlw} studied the  vanishing viscosity limit for isentropic Navier-Stokes equations when the solution of Euler equations \eqref{11.3} is 2-rarefaction wave with  vacuum state.
For the cases when density dependent viscous coefficient, Jiu-Wang-Xin \cite{j0,j00} proved the asymptotic stability of rarefaction waves to \eqref{11.1}. Moreover, they  shown that if the initial values contained vacuum state, then initial vacuum at far field remained for all the time \cite{j00}, which is in sharp contrast to the case of nonvacuum rarefaction waves studied in \cite{j0}, where all the possible vacuum states vanished in finite time. There are also many results on the vanishing viscosity limit from nonisentropic Navier-Stokes equations to
the corresponding nonisentropic Euler equations. We
refer to Jiang-Ni-Sun \cite{jns} and Xin-Zeng\cite{xz} for  the rarefaction wave, Wang
\cite{wy} for the shock wave, Ma \cite{m} for the contact discontinuity, and Huang-Wang-Yang \cite{hwy-k,hwy-a} for the superposition of two rarefaction waves and a contact discontinuity
and the superposition of a shock wave and a rarefaction wave.

In this paper, given the 1-shock of the  Euler equations \eqref{11.3}
\begin{equation}\label{11.5}
(v,u)(x,t)={
    \left\{
    \begin{array}{ll}
(v_{-},u_{-}),\ \ x<st,\\
(v_{+},u_{+}),\ \ x>st,
\end{array} \quad  v_->v_+, \ s<0,
 \right.}
\end{equation}
which satisfies
the Rankine-Hugoniot condition
\begin{equation}\label{rh}
\left\{\begin{array}{l}
-s(v_+-v_-)-(u_+-u_-)=0,\\
-s(u_+-u_-)+( p(v_+)-p(v_-) )=0,
\end{array}\right.
\end{equation}
and the Lax entropy condition
\begin{equation}\label{ec}
u_+<u_-,
\end{equation}
we investigate the vanishing viscosity limit of the solution of the system \eqref{11.1} as the viscous coefficient tends to  zero.  It is shown that when  $\alpha \to 0$, if  the wave strength  is sufficiently weak, there exists a smooth solution
to the Navier-Stokes equations \eqref{11.1}, which converges to the  1-shock \eqref{11.5}
of the Euler equations.  In contrast with the previous work  \cite{hl}, it is of interest to note that as $\alpha \to 0$ in \eqref{11.1}, the viscosity coefficient becomes constant. In this regard, our work extends in some sense the corresponding results obtained in  \cite{hl}.

We set
\begin{equation}\label{vs1}
\delta=|v_{+}-v_{-}|
\end{equation}
be the wave strength and assume that
\begin{equation}\label{vs}
\delta\ll 1,
\end{equation}
i.e., the   wave is sufficiently weak.
Then we state the main result of this paper as follows.

\begin{theorem}
Let $(v^{s}, u^{s})(x,t)$ be the 1-shock to the  Euler equations \eqref{11.3}    defined in \eqref{11.5}. There exists a small positive constant $\delta_{0}$, such that if the wave strength satisfies $\delta\leq\delta_{0}$, the Navier-Stokes system \eqref{11.1} admits a family of global smooth solutions $(v^{\alpha}, u^{\alpha})(x,t)$ with well-prepared initial data \eqref{38.5} below for any $\alpha>0$. Moreover,  it holds that
\begin{equation}\label{v3}
\begin{array}{ll}
\|(v^{\alpha}-v^{s},u^{\alpha}-u^{s})\|_{L^{\infty}(\Omega)}\rightarrow0, \ \ \   as \ \ \  \alpha\rightarrow0,
\end{array}
\end{equation}
where $\Omega=\{(x,t)\big| |x-st  |\geq h, \ h\leq t\leq+\infty   \}$, $h$ is any
positive constant.

\end{theorem}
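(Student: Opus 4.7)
The plan is to prove the theorem in four steps: construct a viscous shock profile of \eqref{11.1} with the same end states as the Euler shock; quantify how sharply this profile concentrates around $x = st$ as $\alpha \to 0$; establish nonlinear stability of the profile under small perturbations (which supplies the global smooth solution $(v^\alpha,u^\alpha)$ from the well-prepared data); and split the total error to conclude $L^\infty$ convergence on $\Omega$.

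For the profile, I seek $(V^\alpha, U^\alpha)(\xi)$, $\xi = x - st$, solving \eqref{11.1}. The mass equation gives $U^\alpha(\xi) = u_- - s(V^\alpha(\xi) - v_-)$, and inserting this into the momentum equation and integrating once from $-\infty$ reduces everything to the scalar ODE
\begin{equation*}
\alpha\, s\,(V^\alpha)^{-1-\alpha}\,(V^\alpha)'(\xi) = -\bigl[s^{2}(V^\alpha - v_-) + p(V^\alpha) - p(v_-)\bigr].
\end{equation*}
The right-hand side vanishes at $v_\pm$ by Rankine--Hugoniot \eqref{rh}, and the Lax condition \eqref{ec} together with $p''>0$ forces $v_-$ to be a source and $v_+$ a sink (for $\delta$ small), so a unique monotone heteroclinic orbit $V^\alpha$ exists. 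Since the linearized decay rate near $v_\pm$ is of order $\delta/\alpha$, the profile has width $O(\alpha/\delta)$ and satisfies
\begin{equation*}
\bigl|(V^\alpha - v_\pm,\,U^\alpha - u_\pm)(\xi)\bigr| \le C\delta\,e^{-c\delta|\xi|/\alpha},\qquad \pm\xi \ge 0,
\end{equation*}
with $c, C$ independent of $\alpha$. In particular $\|(V^\alpha, U^\alpha)(\cdot - st) - (v^s, u^s)\|_{L^\infty(\Omega)} \le C\delta e^{-c\delta h/\alpha}\to 0$.

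For stability, set $\phi = v^\alpha - V^\alpha$, $\psi = u^\alpha - U^\alpha$, and choose well-prepared data so that its anti-derivative $(\Phi_0, \Psi_0)$ is small in $H^2$. Introducing $\Phi(x,t) = \int_{-\infty}^x\phi\,dy$, $\Psi(x,t) = \int_{-\infty}^x\psi\,dy$, the mass equation gives $\Phi_t = \Psi$ and the momentum equation becomes
\begin{equation*}
\Psi_t + p'(V^\alpha)\,\Phi_x = \Bigl(\tfrac{\mu(v^\alpha)}{v^\alpha}\Psi_x\Bigr)_x + \mathcal{R},
\end{equation*}
where $\mathcal{R}$ collects quadratic errors in $(\phi,\psi)$ and interaction terms with $V^\alpha_x$. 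A weighted $H^2$ estimate (multiplying by $\Psi$, by $-p'(V^\alpha)\Phi$, and by successive spatial derivatives, then integrating by parts) yields, for $\delta\le\delta_0$,
\begin{equation*}
\sup_{t\ge 0}\|(\Phi,\Psi)\|_{H^2}^2 + \int_0^\infty\!\!\int\!\Bigl(|V^\alpha_x|\,\Phi^2 + \tfrac{\mu(V^\alpha)}{V^\alpha}\,\Psi_x^2 + \cdots\Bigr)dx\,dt \le C\|(\Phi_0,\Psi_0)\|_{H^2}^2,
\end{equation*}
with $C$ independent of $\alpha$. Combined with local well-posedness and a continuation argument, this yields the global smooth $(v^\alpha,u^\alpha)$, and by $H^1\hookrightarrow L^\infty$ in one dimension $\|(\phi,\psi)\|_{L^\infty}\to 0$ as the initial perturbation vanishes. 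The target \eqref{v3} then follows from the splitting $v^\alpha - v^s = \phi + (V^\alpha(x-st)-v^s(x,t))$ (similarly for $u$), whose two summands are controlled by stability and by profile sharpness, respectively.

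The principal obstacle is obtaining the energy estimate with constants independent of $\alpha$. The dissipation coefficient $\mu(V^\alpha)/V^\alpha = \alpha(V^\alpha)^{-1-\alpha}$ is of order $\alpha$, while $V^\alpha_x$ is of order $\delta^2/\alpha$ inside the layer, so the interaction terms $\int V^\alpha_x\phi\psi$, $\int V^\alpha_x\phi^2$, $\int V^\alpha_x\phi\Psi$ appearing in $\mathcal{R}$ naively threaten to contribute constants of order $1/\alpha$. The mechanism that rescues the estimate is the \emph{layer integral identity} $\|V^\alpha_x\|_{L^1} = v_- - v_+ = \delta$, which in combination with Cauchy--Schwarz and $L^\infty \hookleftarrow H^1$ converts these dangerous terms into expressions bounded by $\delta\|\Phi\|_{H^2}\|\Psi\|_{H^2}$, which the choice $\delta\le\delta_0$ absorbs uniformly in $\alpha$. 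The same mechanism must be propagated through the higher-order estimates; the pointwise control $0<c_1\le v^\alpha\le c_2<\infty$ needed to handle the nonlinear coefficient $\mu(v^\alpha)/v^\alpha$ is maintained through the $H^2$ bound by a standard bootstrap.
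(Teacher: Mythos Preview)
Your four-step outline (profile, sharpness, stability, splitting) is structurally correct, and for the precise statement of Theorem~1.1 the argument can in fact be short-circuited: the well-prepared data \eqref{38.5} is \emph{exactly} the viscous shock profile, so by uniqueness $(v^\alpha,u^\alpha)\equiv(V^\alpha,U^\alpha)$, hence $(\phi,\psi)\equiv 0$, and \eqref{v3} follows directly from the profile sharpness bound you wrote down. You seem not to have noticed this --- you speak of choosing data ``so that its anti-derivative $(\Phi_0,\Psi_0)$ is small in $H^2$'' and of letting ``the initial perturbation vanish,'' whereas the paper's data gives $(\Phi_0,\Psi_0)=0$ identically (see \eqref{318.15}).

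The paper nonetheless proves a full uniform stability result (its Theorem~3.1 and Proposition~3.2), and here your route diverges in a way that would not close. You attempt the energy estimates directly in $(x,t)$ and propose to obtain $\alpha$-independent constants via the layer identity $\|V^\alpha_x\|_{L^1}=\delta$. That identity does tame cross terms in $\mathcal R$, but it does nothing for the basic structural defect: the dissipation coefficient is $\mu(v)/v=\alpha v^{-1-\alpha}=O(\alpha)$, so the $L^2$ estimate yields only $\alpha\int_0^T\|\Psi_x\|^2\,dt\le C$, not $\int_0^T\|\Psi_x\|^2\,dt\le C$, and the higher-order estimates cannot close uniformly in $\alpha$. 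The paper sidesteps this by the rescaling $y=x/\alpha$, $\tau=t/\alpha$, which converts \eqref{11.1} into a system with $O(1)$ viscosity $(u_y/v^{1+\alpha})_y$. In these variables the profile has fixed width $O(1/\delta)$, the anti-derivative energy estimates go through with constants depending only on $\delta$, and the limit $\alpha\to 0$ at fixed $t\ge h$ becomes the large-time limit $\tau=t/\alpha\to\infty$; the standard decay $\|(\Phi_y,\Psi_y)(\tau)\|\to 0$ then disposes of the perturbation part. This rescaling is the one device you are missing.

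Two minor corrections to your integrated system: the mass equation gives $\Phi_t=\Psi_x$, not $\Phi_t=\Psi$; and the anti-derivative of the viscous term is $\mu(v)v^{-1}\Psi_{xx}+\cdots$, not $(\mu(v)v^{-1}\Psi_x)_x$.
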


\vspace{0.2cm}

\noindent{\textbf{ Remark 1.1.}}
Without loss of generality, in this paper, we are interested in the case $s<0$, i.e., 1-shock, and the analysis for the case $s>0$  is similar.
\vspace{0.2cm}

The rest of this paper is organized as follows. In section 2, we recall some properties of the viscous shock wave to system \eqref{11.1}. Sections 3
reformulates the original problem to obtain a new Cauchy problem, and establishes the a priori
estimates for proving the main theorem. In Section 4, we give the proof of the a priori estimates
and complete the proof of the main theorem.

\vspace{0.2cm}

\noindent{\bf Notations} \  In this paper,  we use the standard notations $L^p(R)$, $W^{k,m}(R)$ and $H^k(R)$ to denote the $L^p$ and Sobolev space
in $R$ with norms $\| \cdot \|_{L^p(R)}:=\| \cdot \|_{L^p}$, $\| \cdot \|_{W^{k,m}(R)}:=\| \cdot \|_{W^{k,m}}$ and $\| \cdot \|_{H^k(R)}=\| \cdot \|_{k}$, respectively. The Euclidean space $R$ will be often abbreviated without confusion. For simplicity, $\| \cdot \|_{L^2}: =\| \cdot \| $. $C$ denotes the  generic positive constants which
are independent of the variables $\alpha$, $x$ and $t$, except for additional explanations.

\section{Preliminaries}

In this section, we  recall some properties of the viscous shock wave of \eqref{11.1},
we  refer to \cite{mm,mw} for more details.

The viscous shock wave of the Navier-Stokes system \eqref{11.1}
takes the formula  $(V,U)(\xi)(\xi=x-st)$  where  $s$  is the shock speed. The viscous shock wave  $(V,U)(x-st)$  connecting  $(v_{-},u_{-})$  and  $(v_{+},u_{+})$  satisfies
\begin{equation}\label{211.7}
\left\{\begin{array}{l}
-sV_{\xi}-U_{\xi}=0,\\
-sU_{\xi}+p(V)_{\xi}=\alpha\Big(\dfrac{U_{\xi}}{V^{1+\alpha}}\Big)_{\xi},\\
(V,U)(\pm \infty)=(v_{\pm},u_{\pm}).
\end{array}\right.
\end{equation}
From this, we can get the following result.

\begin{proposition}
For any $(v_{\pm},u_{\pm})$ with $v_->v_+>0$ and $s<0$, which satisfy \eqref{rh} and \eqref{ec},  the  viscous shock wave
$(V,U)(\xi)$ connecting $(v_-,u_-)$ and $(v_+,u_+)$ for the system \eqref{11.1} exists uniquely  up to a shift and
satisfies
\begin{equation}\label{v1}
\begin{array}{ll}
0<v_+<V(\xi)<v_-,\ \ \  u_{+}<U(\xi)<u_{-}.
\end{array}
\end{equation}
\begin{equation}\label{v13}
\begin{array}{ll}
| V(\xi)-v_{\pm} |\leq C\delta e^{-\frac{c\delta}{\alpha}|x-st|},  \\[2mm]
| U(\xi)-u_{\pm} |\leq C\delta e^{-\frac{c\delta}{\alpha}|x-st|},
\end{array}
\end{equation}
\begin{equation}\label{vsda13}
\begin{array}{ll}
|\partial_{x}(V,U)|\leq \frac{C}{\alpha}\delta^{2} e^{-\frac{c\delta}{\alpha}|x-st|},
\end{array}
\end{equation}
\begin{equation}\label{vsdaa13}
\begin{array}{ll}
|\partial_{xx}(V,U)|\leq \frac{C}{\alpha^{2}}\delta^{2} e^{-\frac{c\delta}{\alpha}|x-st|},
\end{array}
\end{equation}
\begin{equation}\label{vklj13}
\begin{array}{ll}
\hspace{-2.3cm}\partial_{x}U< 0.
\end{array}
\end{equation}
\end{proposition}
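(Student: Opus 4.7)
The plan is to reduce the ODE system \eqref{211.7} to a single first-order ODE for $V$ and analyze it by standard phase-plane arguments.

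First, integrating $-sV_\xi - U_\xi=0$ and invoking the Rankine--Hugoniot condition \eqref{rh} gives $U - u_\pm = -s(V - v_\pm)$, so $U$ is determined by $V$. Substituting into the momentum equation and integrating once (with boundary values taken at $\xi=-\infty$) yields
\[
-\alpha s\,\frac{V_\xi}{V^{1+\alpha}} \;=\; H(V) \,:=\, s^{2}(V-v_-) + p(V)-p(v_-).
\]
By \eqref{rh} we also have $H(v_+) = 0$, while by \eqref{11.2} we have $H''=p''>0$, so $H$ is strictly convex with simple zeros at $v_\pm$ and hence $H(V)<0$ on $(v_+,v_-)$. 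Combined with $-\alpha s>0$ this forces $V_\xi<0$, which (together with $U_\xi = -sV_\xi$ and $s<0$) gives \eqref{vklj13} and the pointwise bounds \eqref{v1}.

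Existence and uniqueness of a heteroclinic trajectory joining $v_-$ at $-\infty$ to $v_+$ at $+\infty$ (modulo translation) is then a standard ODE fact: separation of variables together with the simple-zero structure of $H$ at $v_\pm$ shows both endpoints are only reached in infinite ``time''. To obtain the decay estimate \eqref{v13}, I would linearize $H$ at the two endpoints. Taylor expansion of $p$, together with the chord identity $s^{2} = -\bigl(p(v_+)-p(v_-)\bigr)/(v_+-v_-)$ coming from \eqref{rh} and the convexity of $p$, yields
\[
H'(v_-) = s^{2}+p'(v_-) > 0,\qquad H'(v_+) = s^{2}+p'(v_+) < 0,
\]
with $|H'(v_\pm)| = O(\delta)$ (by the mean value theorem applied to $p'$ on $[v_+,v_-]$). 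Linearizing the reduced ODE thus produces hyperbolic equilibria at $v_\pm$ with attraction rates $\kappa_\pm$ of order $\delta/\alpha$; this gives the local exponential bound $|V-v_\pm|+|U-u_\pm| \le C\delta\, e^{-c\delta|\xi|/\alpha}$, and monotonicity of $V$ extends it globally on each half-line, yielding \eqref{v13}.

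Finally, \eqref{vsda13} and \eqref{vsdaa13} follow by successively differentiating $V_\xi = -V^{1+\alpha}H(V)/(\alpha s)$ and substituting the bounds just obtained: since $H(V) = O\!\bigl(\delta\,|V-v_\pm|\bigr) \le C\delta^{2}e^{-c\delta|\xi|/\alpha}$, the ODE gives $|V_\xi|\le (C/\alpha)\delta^{2}e^{-c\delta|\xi|/\alpha}$, and one more differentiation contributes an extra factor of order $\delta/\alpha$ which is absorbed into \eqref{vsdaa13} using \eqref{vs}. The main obstacle is the sharp $O(\delta)$ size of $H'(v_\pm)$: without the weakness of the shock together with the convexity \eqref{11.2}, one cannot obtain the correct $\delta/\alpha$ scaling of the exponential rate, and this scaling is precisely what is needed for the uniform-in-$\alpha$ energy estimates carried out in the later sections.
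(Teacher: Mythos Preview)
Your argument is correct and is precisely the standard phase-plane reduction used for viscous shock profiles. The paper itself does not give a proof of this proposition: it simply states that ``the proof of this proposition is similar to the argument in \cite{mm}, we omit it here.'' Your reduction to the scalar ODE $-\alpha s\,V_\xi/V^{1+\alpha}=H(V)$, the convexity argument showing $H<0$ on $(v_+,v_-)$, the linearization at $v_\pm$ giving $|H'(v_\pm)|=O(\delta)$ and hence exponential rates of order $\delta/\alpha$, and the bootstrapping to derivative bounds are exactly what that reference does (adapted to the viscosity $\mu(v)=\alpha v^{-\alpha}$). There is nothing to add; your sketch would constitute a complete proof once the routine ODE details are filled in.
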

The proof of this proposition is similar to the argument in \cite{mm}, we omit it here.

\section{Reformulation of the original problem}

For the convenience of the analysis, we reformulate the system \eqref{11.1} by changing the variables
\begin{equation}\label{31.5}
y=\dfrac{x}{\alpha},\ \ \ \tau=\dfrac{t}{\alpha}.
\end{equation}
Then the system \eqref{11.1} becomes
\begin{equation}\label{3211.6}
\left\{\begin{array}{l}
v_\tau  - u_y  =0,\\
u_\tau  +  p(v)_y =\big(\dfrac{u_{y}}{v^{1+\alpha}}\big)_{y},
\end{array} \ \ \ y\in R,\ \ \ \tau\geq0.
\right.
\end{equation}

In order to approximate the 1-shock, we define
\begin{equation}\label{321.5}
\tilde{V}(y,\tau):=V(y-s\tau)=V(\frac{x-st}{\alpha})=:V^{\alpha}(x,t),
\end{equation}
\begin{equation}\label{3211.5}
\tilde{U}(y,\tau):=U(y-s\tau)=U(\frac{x-st}{\alpha})=:U^{\alpha}(x,t),
\end{equation}
and $(\tilde{V},\tilde{U})$ satisfies
\begin{equation}\label{33121.6}
\left\{\begin{array}{l}
\tilde{V}_\tau  -\tilde{U}_y  =0,\\
\tilde{U}_\tau  +  p(\tilde{V})_y =\big(\dfrac{\tilde{U}_{y}}{\tilde{V}^{1+\alpha}}\big)_{y},
\end{array}\ \ \ y\in R,\ \ \ \tau\geq0.
\right.
\end{equation}

Let the above approximate solutions at\ $t=0$\ to be the initial data of the Navier-Stokes system \eqref{11.1}, i.e.,
\begin{equation}\label{38.5}
(v,u)(x,t=0)=(V^{\alpha},U^{\alpha})(x,0).
\end{equation}

We study the system \eqref{3211.6} by the antiderivative technique. Let
\begin{equation}\label{38.15}
\phi(y,\tau)=(v-\tilde{V})(y,\tau),\ \ \psi(y,\tau)=(u-\tilde{U})(y,\tau).
\end{equation}
and define
\begin{equation}\label{38.15}
\Phi(y,\tau)=\int^{y}_{-\infty}\phi(z,\tau)dz,\ \ \Psi(y,\tau)=\int^{y}_{-\infty}\psi(z,\tau)dz.
\end{equation}

By our initial data \eqref{38.5}, we have
\begin{equation}\label{318.15}
(\Phi,\Psi)(y,\tau=0)=0.
\end{equation}

Then  \eqref{3211.6} and \eqref{33121.6} show
\begin{equation}\label{3521.6}
\left\{\begin{array}{l}
\Phi_{\tau}-\Psi_{y}=0,\\
\Psi_{\tau}+p(v)-p(\tilde{V})=\dfrac{u_{y}}{v^{1+\alpha}}-\dfrac{\tilde{U}_{y}}{\tilde{V}^{1+\alpha}}.
\end{array}\right.
\end{equation}

We linearize \eqref{3521.6} around the approximate profile $(\tilde{V},\tilde{U})$ to obtain
\begin{equation}\label{351221.6}
\left\{\begin{array}{l}
\Phi_{\tau}-\Psi_{y}=0,\\
\Psi_{\tau}+p'(\tilde{V})\Phi_{y}=\dfrac{\Psi_{yy}}{\tilde{V}^{1+\alpha}}+Q,
\end{array}\right.
\end{equation}
where
\begin{equation}\label{3128.15}
Q=-\big(p(v)-p(\tilde{V})-p'(\tilde{V})(v-\tilde{V})\big)+\big(\frac{1}{v^{1+\alpha}}-\frac{1}{\tilde{V}^{1+\alpha}}\big)\big(\Psi_{yy}+\tilde{U}_{y}\big).
\end{equation}

In \eqref{3128.15},
\begin{equation}\label{331128.15}
\begin{array}{ll}
\dfrac{1}{v^{1+\alpha}}-\dfrac{1}{\tilde{V}^{1+\alpha}}=-(1+\alpha)\dfrac{\bar{v}^{\alpha}}{(v\tilde{V})^{1+\alpha}}\Phi_{y},
\end{array}
\end{equation}
where the quantity  $\bar{v}$ is between $v$ and $\tilde{V}$.

According to Proposition 2.1, substituting \eqref{331128.15} into \eqref{3128.15}, we can get
\begin{equation}\label{37.15}
\begin{array}{ll}
Q&\leq C(\Phi^{2}_{y}+|\Phi_{y}\Psi_{yy}|+|\Phi_{y}\tilde{U}_{y}|)\\[5mm]
&\leq C(\Phi^{2}_{y}+|\Phi_{y}\Psi_{yy}|)+C\delta^{2} e^{-c\delta|y|-c\delta|\tau|}|\Phi_{y}|.
\end{array}
\end{equation}

We look for the solution to  \eqref{351221.6} in the following functional space:
\begin{equation}\label{387.15}
X(I)=\big\{(\Phi,\Psi)\in C(I; H^{2});\Psi_{y}\in L^{2}(I; H^{2})\big\}
\end{equation}
where\ $I\subset R$\ is any interval.
Now we can show the following result.
\begin{theorem}
There are positive constants\ $\delta_{0}$\ and $C$\ such that, if\ $\delta\leq\delta_{0}$, then there exists a unique solution\ $(\Phi,\Psi)(\tau)\in X([0,\ T])$\ to \eqref{351221.6} and \eqref{318.15}. Furthermore, it holds that
\begin{equation}\label{3ii9.5}
\|(\Phi,\Psi)(\tau)\|_{H^{2}}^{2}+\int_{0}^{\tau}\int|\tilde{U_{y}}| \Psi^{2}dyds+\int_{0}^{\tau}\|\Phi_{y}(s)\|_{H^{1}}^{2}+\|\Psi_{y}(s)\|_{H^{2}}^{2}ds\leq C\delta^{\frac{1}{3}}.
\end{equation}
\end{theorem}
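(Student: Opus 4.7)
My plan is the classical combination of local existence with a uniform a priori estimate, closed by a continuation argument. Local existence in $X([0,T_0])$ for some $T_0>0$ follows from a standard parabolic fixed-point scheme applied to the semilinear system \eqref{351221.6}, using the uniform ellipticity $\tilde V^{-(1+\alpha)}\ge C>0$ guaranteed by Proposition 2.1 and the boundedness of $p'(\tilde V)$. Setting
$$N(T)=\sup_{0\le\tau\le T}\|(\Phi,\Psi)(\tau)\|_{H^2},$$
I make the a priori ansatz $N(T)\le\nu\ll1$ and aim to derive \eqref{3ii9.5} with constants independent of $T$; then choosing $\nu$ and $\delta\le\delta_0$ small closes the bootstrap and extends the solution to all of $[0,T]$.

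For the basic $L^2$ step I multiply the continuity equation of \eqref{351221.6} by $-p'(\tilde V)\Phi$ and the momentum equation by $\Psi$, then add and integrate. Integration by parts produces the principal energy $\int(-\tfrac12 p'(\tilde V)\Phi^2+\tfrac12\Psi^2)\,dy$ and the viscous dissipation $\int \Psi_y^2/\tilde V^{1+\alpha}\,dy$. The shock-profile remainder collapses to $\int p''(\tilde V)\bigl(-\tfrac{s}{2}\tilde V_y\Phi^2-\tilde V_y\Phi\Psi\bigr)dy$; using $\tilde U_y=-s\tilde V_y$ together with the Lax sign $\tilde U_y<0$, completion of the square extracts the key dissipation $c\int|\tilde U_y|\Psi^2\,dy$, leaving a residual of the form $C\int|\tilde U_y|\Phi^2\,dy\le C\delta^2\|\Phi\|^2$ which is absorbed by Gronwall against $-p'(\tilde V)\Phi^2$. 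The nonlinear source $\int Q\Psi\,dy$ is bounded via \eqref{37.15}: the quadratic piece $\Phi_y^2+|\Phi_y\Psi_{yy}|$ is controlled by $N(T)$ times the higher-order dissipation, while the shock-source piece $\delta^2 e^{-c\delta|y-s\tau|}|\Phi_y|$ is handled by Cauchy--Schwarz against $\|\tilde V_y\|_{L^2}\lesssim\delta^{3/2}$.

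For the higher-order estimates I differentiate \eqref{351221.6} once and twice in $y$ and multiply by $\Psi_y$ and $\Psi_{yy}$ respectively, recovering $\|\Psi_y\|_{H^1}^2$ in the dissipation; commutators with $\tilde V^{-(1+\alpha)}$ and $p'(\tilde V)$ produce factors $\tilde V_y,\tilde V_{yy}$ that, by Proposition 2.1 and the ansatz, are absorbed up to an $O(\nu+\delta)$ factor. To extract the missing $\Phi_y,\Phi_{yy}$ dissipation I solve the momentum equation as $p'(\tilde V)\Phi_y=-\Psi_\tau+\Psi_{yy}/\tilde V^{1+\alpha}+Q$, multiply by $-\Phi_y$, then differentiate this identity in $y$ and multiply by $-\Phi_{yy}$; the cross term $\Phi_y\Psi_\tau=\partial_\tau(\Phi_y\Psi)-\Psi_{yy}\Psi$, obtained using $\Phi_\tau=\Psi_y$, contributes only a time-boundary term plus the already-controlled $\|\Psi_y\|^2$. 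Summing everything with suitable small weights yields
$$\|(\Phi,\Psi)(\tau)\|_{H^2}^2+\!\int_0^\tau\!\!\!\int|\tilde U_y|\Psi^2\,dy\,ds+\!\int_0^\tau\!\!\bigl(\|\Phi_y\|_{H^1}^2+\|\Psi_y\|_{H^2}^2\bigr)ds\le C\delta^{1/3}+C(\nu+\delta)\cdot(\mathrm{LHS}),$$
closing \eqref{3ii9.5} for $\nu,\delta$ sufficiently small.

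The main obstacle is that $\Phi$ inherits no bulk dissipation from the leading-order energy identity: only the localized term $|\tilde U_y|\Phi^2$ is available, with $L^1$ weight of order $\delta$, which cannot dominate full-space cross terms on its own. The elliptic rewriting of the momentum equation to supply $\Phi_y$ and $\Phi_{yy}$ dissipation is the crucial device, but it trades $\Phi_y$ control for $\Psi_{yy}$ norms, coupling the second-order step back into the basic one and forcing a simultaneous closure. The $\delta^{1/3}$ exponent -- weaker than the $\delta^2$ one might naively hope for from zero initial perturbation -- comes out of the Young splittings applied to the shock source terms, whose $L^p$ bounds scale as $\|\tilde V_y^{(k)}\|_{L^p}\sim\delta^{k+1-1/p}$ and whose time integrals must be balanced against the nonlinear cubic errors to yield the stated power of $\delta$.
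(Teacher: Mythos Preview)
Your overall architecture --- local existence plus a bootstrap a priori estimate on $N(T)$, with the anti-derivative energy supplemented by the momentum-equation rewriting $p'(\tilde V)\Phi_y=-\Psi_\tau+\tilde V^{-(1+\alpha)}\Psi_{yy}+Q$ to recover $\Phi_y$ (and then $\Phi_{yy}$) dissipation --- is exactly the paper's. The higher-order steps and the final closure are sketched correctly.

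The gap is in your basic $L^2$ step. With your multipliers ($-p'(\tilde V)\Phi$ on the first equation and $\Psi$ on the second), the shock remainder is indeed $\int p''(\tilde V)\bigl(-\tfrac{s}{2}\tilde V_y\Phi^2-\tilde V_y\Phi\Psi\bigr)\,dy$, but since $s<0$ and $\tilde V_y<0$ the $\Phi^2$ coefficient is \emph{negative}. Completing the square in this indefinite form gives
\[
-\tfrac{|s|}{2}|\tilde V_y|\Bigl(\Phi+\tfrac{1}{s}\Psi\Bigr)^{2}+\tfrac{1}{2|s|}|\tilde V_y|\,\Psi^{2},
\]
and while the second piece looks like the dissipation you want, the first is not ``of the form $C|\tilde U_y|\Phi^2$'': expanding it produces a $-\tfrac{1}{2|s|}|\tilde V_y|\Psi^2$ that exactly cancels your good term. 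No algebraic rearrangement of this quadratic form can yield $c|\tilde U_y|\Psi^2$ minus only a $\Phi^2$ residual, because the form has zero $\Psi^2$ coefficient to begin with. The paper sidesteps this by putting the variable weight on the \emph{other} equation: multiply the momentum equation by $\Psi/|p'(\tilde V)|$ and the first equation simply by $\Phi$. Then the pressure term becomes exactly $-\Phi_y\Psi$, which combines with $-\Phi\Psi_y$ into a pure $y$-derivative with no cross commutator, while the time-commutator $(\tfrac{1}{2|p'(\tilde V)|})_\tau\Psi^2$ directly supplies $+\tfrac{p''(\tilde V)}{2|p'(\tilde V)|^2}|\tilde U_y|\Psi^2$ with the correct sign.

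Your Gronwall fallback is also insufficient here: the constants in \eqref{3ii9.5} must be independent of $T$ (the estimate is used later with $\tau\to\infty$ to prove Theorem~1.1), and a factor $e^{C\delta^2\tau}$ destroys this. The $|\tilde U_y|\Psi^2$ dissipation is not decorative --- it is what absorbs, uniformly in $\tau$, the viscous commutator $\int(\tilde V^{-(1+\alpha)})_y\Psi\Psi_y\,dy$, which otherwise leaves an uncontrolled time-integrated $\|\Psi\|^2$ term on the right.
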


To obtain Theorem 3.1, we combine the local existence of the solution
 with the a priori estimates. The proof of the local existence is standard, we omit it. Thus, we only need to obtain the following a priori estimate.
To begin with, we make the following a priori assumption
\begin{equation}\label{387.125}
N(T):=\sup_{\tau\in[0,T]}\|(\Phi,\Psi)(\tau)\|_{H^{2}}^{2}\leq \eta_{0}
\end{equation}
where\ $[0,T]$\ is a time interval on which the solution is supposed to exist, and $\eta_{0}$ is a positive small constant which is to be determined.
\begin{proposition}(A priori estimates)
Assume that there exists a solution $(\Phi,\Psi)\in X([0,T])$ to \eqref{351221.6} and \eqref{318.15}. Then there exist positive constants $\delta_{0}$, $\eta_{0}$ and $C$ such that, if\ $\delta\leq\delta_{0}$ and  $N(T)\leq\eta_{0}$, then
 $(\Phi,\Psi)$  satisfies for $\tau\in[0,T]$,
 \begin{equation}\label{399.5}
\|(\Phi,\Psi)(\tau)\|_{H^{2}}^{2}+\int_{0}^{\tau}\int \mid\tilde{U_{y}}\mid \Psi^{2}dyds+\int_{0}^{\tau}\|\Phi_{y}(s)\|_{H^{1}}^{2}+\|\Psi_{y}(s)\|_{H^{2}}^{2}ds\leq C\delta^{\frac{1}{3}}.
\end{equation}
\end{proposition}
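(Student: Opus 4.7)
The plan is to establish \eqref{399.5} via a hierarchy of weighted $L^2$ energy estimates on the antiderivative system \eqref{351221.6} with zero initial data \eqref{318.15}, closed by continuation from the a priori bound $N(T)\le\eta_0$. Throughout, Proposition~2.1 is the main tool: after the scaling \eqref{31.5}, the first and second derivatives of $(\tilde V,\tilde U)$ decay like $\delta^{2}e^{-c\delta|y-s\tau|}$ and thus have $L^{1}_{y}$ norm of order $\delta$.

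First I would perform the basic $L^{2}$ estimate: multiply \eqref{351221.6}$_{1}$ by $-p'(\tilde V)\Phi$ and \eqref{351221.6}$_{2}$ by $\Psi$, add and integrate over $\mathbb{R}$. Using integration by parts together with the shock-profile identity $\tilde V_\tau=\tilde U_y=-s\tilde V_y$ coming from \eqref{33121.6}, this yields
\[
\frac{d}{d\tau}\!\int\!\Bigl[\tfrac{-p'(\tilde V)}{2}\Phi^{2}+\tfrac{1}{2}\Psi^{2}\Bigr]dy+\!\int\!\frac{\Psi_{y}^{2}}{\tilde V^{1+\alpha}}dy=-\!\int\!\tfrac{p''(\tilde V)\tilde V_\tau}{2}\Phi^{2}dy+\!\int\!p''(\tilde V)\tilde V_{y}\Phi\Psi\,dy-\!\int\!\Psi\Psi_{y}\bigl(\tilde V^{-(1+\alpha)}\bigr)_{y}dy+\!\int\!Q\Psi\,dy.
\]
The first two right-hand terms are of size $O(\delta)$ thanks to $\int(|\tilde V_\tau|+|\tilde V_y|)dy=O(\delta)$ combined with a Cauchy--Schwarz splitting that in particular produces the good $|\tilde U_y|\Psi^{2}$ quantity appearing on the left of \eqref{399.5}; the third is absorbed by the $\Psi_{y}^{2}/\tilde V^{1+\alpha}$ dissipation; and $\int Q\Psi$ is controlled via Sobolev embedding together with $N(T)\le\eta_{0}$ after inserting \eqref{37.15}.

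Because the parabolic dissipation lives only in $\Psi_{y}$, I would next recover the dissipation of $\Phi_{y}$ from \eqref{351221.6}$_{2}$ by multiplying it by $-\Phi_{y}$ and using $\Phi_\tau=\Psi_{y}$; integration by parts produces a $\tau$-derivative of $-\int\Psi\Phi_{y}\,dy$ together with the good term $\int|p'(\tilde V)|\Phi_{y}^{2}dy$, while the remaining contributions are bounded by $\|\Psi_{y}\|^{2}$, the parabolic cross term $\int\Phi_{y}\Psi_{yy}/\tilde V^{1+\alpha}$, the shock errors, and $\int Q\Phi_{y}$. Integrating in $\tau$ and absorbing $\int\Psi\Phi_{y}\,dy$ back into the Step~1 energy via Cauchy--Schwarz produces $\int_{0}^{\tau}\|\Phi_{y}(s)\|^{2}ds$. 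To promote the estimate to the $H^{2}$ level, I would differentiate \eqref{351221.6} in $y$ once and twice and repeat both the weighted energy estimate and the $\Phi_{y}$-multiplier trick at each derivative level; the outcome is control of $\|(\Phi,\Psi)(\tau)\|_{H^{2}}^{2}$ together with $\int_{0}^{\tau}(\|\Phi_{y}\|_{H^{1}}^{2}+\|\Psi_{y}\|_{H^{2}}^{2})ds$.

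The main obstacle will be closing the nonlinear term $Q$ and its $y$-derivatives in the higher-order estimates. Specifically, the mixed quantity $|\Phi_{y}\Psi_{yy}|$ from \eqref{37.15} has to be handled through Gagliardo--Nirenberg-type interpolation such as $\|\Phi_{y}\|_{L^{\infty}}\le C\|\Phi_{y}\|^{1/2}\|\Phi_{yy}\|^{1/2}$, with the resulting factors absorbed against the dissipation at the cost of $\eta_{0}$-smallness. The exponentially-localized forcing $\delta^{2}e^{-c\delta|y|-c\delta|\tau|}|\Phi_{y}|$ in \eqref{37.15} is the most delicate piece: since $\int_{0}^{\tau}e^{-c\delta s}ds\le C\delta^{-1}$ eats one power of the $\delta^{2}$ coefficient, only a residual power of $\delta$ survives, and the exponent $\tfrac{1}{3}$ in \eqref{399.5} emerges from an optimized Young splitting between the surviving $\|\Phi_{y}\|$-norm and $\|\Phi_{yy}\|$-norm (or its analogue at higher order). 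Fixing $\delta\le\delta_{0}$ and $\eta_{0}$ small enough at the end allows every nonlinear and shock-error contribution to be absorbed into the dissipation, completing \eqref{399.5}.
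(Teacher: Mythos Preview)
Your overall architecture---lowest-order weighted $L^{2}$ estimate, then the $-\Phi_{y}$ multiplier trick to recover $\int_{0}^{\tau}\|\Phi_{y}\|^{2}$, then differentiate and repeat---is exactly the paper's scheme (Lemmas~4.1--4.4). However, Step~1 as written does not close, and the issue is the choice of symmetrizer. With your multipliers $-p'(\tilde V)\Phi$ on \eqref{351221.6}$_{1}$ and $\Psi$ on \eqref{351221.6}$_{2}$, the commutator term you wrote on the right is $-\int\tfrac{p''(\tilde V)\tilde V_{\tau}}{2}\Phi^{2}$; since $\tilde V_{\tau}=\tilde U_{y}<0$ and $p''>0$, this equals $+\int\tfrac{p''(\tilde V)|\tilde U_{y}|}{2}\Phi^{2}$, a \emph{positive bad} term---not the good $|\tilde U_{y}|\Psi^{2}$ dissipation you claim. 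Neither this term nor the leftover cross term $\int p''(\tilde V)\tilde V_{y}\Phi\Psi$ can be absorbed: $|\tilde U_{y}|$ decays only in $y-s\tau$, not separately in $\tau$, so after $\int|\tilde U_{y}|\,dy\le C\delta$ and Sobolev you are left with $C\delta\int_{0}^{\tau}\|\Phi\|\,\|\Phi_{y}\|\,ds$, which grows in $\tau$. The paper instead multiplies \eqref{351221.6}$_{1}$ by $\Phi$ and \eqref{351221.6}$_{2}$ by $-\Psi/p'(\tilde V)$: then the cross terms $\Phi\Psi_{y}+\Phi_{y}\Psi=(\Phi\Psi)_{y}$ integrate away exactly, and the commutator $\bigl(\tfrac{1}{2|p'(\tilde V)|}\bigr)_{\tau}\Psi^{2}$ has the \emph{correct} sign, furnishing $\int\tfrac{p''(\tilde V)}{2|p'(\tilde V)|^{2}}|\tilde U_{y}|\Psi^{2}$ as a genuinely good term on the left. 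This sign asymmetry between the two candidate symmetrizers $\mathrm{diag}(|p'|,1)$ and $\mathrm{diag}(1,1/|p'|)$ is precisely the compressivity of the shock, and picking the right one is essential.

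A secondary point: the $\delta^{1/3}$ in \eqref{399.5} does not come from the forcing $\delta^{2}e^{-c\delta|y|-c\delta|\tau|}|\Phi_{y}|$ in \eqref{37.15}; that piece yields only $O(\delta)$ (the paper's $I_{3}$ in Lemma~4.1). The fractional power appears one derivative higher, in Lemma~4.3, from the term $\int_{0}^{\tau}\!\int|\psi\,\tilde V_{y}|\,dy\,ds$ contained in $Q_{1}$: after $\|\psi\|_{L^{\infty}}\le C\|\psi\|^{1/2}\|\psi_{y}\|^{1/2}$ one applies Young with exponents $(4,\tfrac{4}{3})$ so that $\delta^{4/3}\int_{0}^{\tau}e^{-\tfrac{4}{3}c\delta s}\,ds\le C\delta^{1/3}$.
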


\section{Proof of Proposition 3.2}

In this section, we prove Proposition 3.2.
It can be obtained by the following lemmas. From the a priori assumption \eqref{387.125} with  $\eta_{0}>0$  small enough, it holds that
 \begin{equation}
\frac{1}{4}v_{+}\leq v\leq2v_{+}.
\end{equation}

\begin{lemma}
If $\delta_{0}$ and $\eta_{0}$ are suitable small, for $0\leq \tau\leq T$, it holds for $\delta\leq\delta_{0}$ that
\begin{equation}\label{39.5}
\begin{array}{ll}
\displaystyle \|(\Phi,\Psi)(\tau)\|^{2}+\int_{0}^{\tau}\int \mid\tilde{U_{y}}\mid \Psi^{2}dyds+\int_{0}^{\tau}\|\Psi_{y}(s)\|^{2}ds\\[5mm]
\leq C\delta
\displaystyle+C(\eta_{0}^{\frac{1}{2}}+\delta^{2})\int_{0}^{\tau}\|\Phi_{y}(s)\|^{2}ds+C\eta_{0}^{\frac{1}{2}}\int_{0}^{\tau}\|\Psi_{yy}(s)
\|^{2}ds.
\end{array}
\end{equation}
\end{lemma}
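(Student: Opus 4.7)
I would derive Lemma 4.1 via a basic $L^{2}$ energy estimate for the linearized antiderivative system \eqref{351221.6}. Multiply the mass equation $(\ref{351221.6})_{1}$ by $-p'(\tilde V)\Phi$ and the momentum equation $(\ref{351221.6})_{2}$ by $\Psi$, then sum and integrate over $y\in\mathbb{R}$. The time derivative yields the natural energy $E(\tau):=\int\bigl(-\tfrac{p'(\tilde V)}{2}\Phi^{2}+\tfrac{1}{2}\Psi^{2}\bigr)\,dy\sim \|(\Phi,\Psi)(\tau)\|^{2}$; the hyperbolic cross terms combine into $p'(\tilde V)(\Phi\Psi)_{y}$ which integrates by parts to $-\int p''(\tilde V)\tilde V_{y}\,\Phi\Psi\,dy$; and the time derivative landing on the coefficient $p'(\tilde V)$ contributes $\tfrac{1}{2}\int p''(\tilde V)\tilde V_{\tau}\Phi^{2}\,dy$.

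\textbf{Extracting the good dissipation.} Invoking the traveling-wave identities $\tilde V_{\tau}=\tilde U_{y}=-s\tilde V_{y}$ from Proposition 2.1, the two ``profile'' terms combine into $\tfrac{1}{2}\int p''(\tilde V)\bigl(\tilde V_{\tau}\Phi^{2}-2\tilde V_{y}\Phi\Psi\bigr)dy$. Completing the square,
\[
\tilde V_{\tau}\Phi^{2}-2\tilde V_{y}\Phi\Psi \;=\; -s\tilde V_{y}\bigl(\Phi+\Psi/s\bigr)^{2}+\tilde V_{y}\Psi^{2}/s,
\]
and since $p''>0$, $\tilde V_{y}<0$, $s<0$, the $\Psi^{2}$ piece has the favorable sign and yields the advertised dissipation $\int|\tilde U_{y}|\Psi^{2}$ on the LHS (via $|\tilde V_{y}|=|\tilde U_{y}|/|s|$), while the indefinite piece of order $\int|\tilde V_{y}|(\Phi+\Psi/s)^{2}$ is moved to the RHS as the ``bad'' leftover. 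The viscous term $\int\Psi_{yy}\Psi/\tilde V^{1+\alpha}\,dy$ is integrated by parts, producing $-\int\Psi_{y}^{2}/\tilde V^{1+\alpha}\,dy$ (which supplies the $\int_{0}^{\tau}\|\Psi_{y}\|^{2}ds$ bound on the LHS, using the uniform bound $v_{+}/4\le v\le 2v_{+}$), plus a commutator $-\int(\tilde V^{-(1+\alpha)})_{y}\Psi_{y}\Psi\,dy$ absorbed via $|\tilde V_{y}|\le C\delta^{2} e^{-c\delta|y-s\tau|}$ and Young's inequality.

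\textbf{Nonlinear source contribution.} For $\int Q\Psi\,dy$, I would invoke the pointwise bound \eqref{37.15}. The pieces $\int\Phi_{y}^{2}|\Psi|\,dy$ and $\int|\Phi_{y}\Psi_{yy}||\Psi|\,dy$ are handled by extracting $\|\Psi\|_{L^{\infty}}\le C\eta_{0}^{1/2}$ via Sobolev embedding and the a priori hypothesis \eqref{387.125}, producing exactly the $C\eta_{0}^{1/2}(\|\Phi_{y}\|^{2}+\|\Psi_{yy}\|^{2})$ terms allowed on the RHS of \eqref{39.5}. The exponential-source piece $C\delta^{2}e^{-c\delta|y-s\tau|}|\Phi_{y}||\Psi|$ is split by Cauchy--Schwarz against the exponential weight into a $C\delta^{2}\|\Phi_{y}\|^{2}$ contribution together with a weighted $\Psi^{2}$ integral bounded by the good dissipation $\int|\tilde U_{y}|\Psi^{2}$.

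\textbf{Main obstacle.} The delicate point is the bad quadratic form $\int|\tilde V_{y}|(\Phi+\Psi/s)^{2}dy$: after expanding $(\Phi+\Psi/s)^{2}\le 2\Phi^{2}+2\Psi^{2}/s^{2}$, the $\Psi^{2}$ component is absorbable by the good dissipation, but $\int|\tilde U_{y}|\Phi^{2}\,dy$ has no intrinsic dissipation, reflecting the fact that the combination $\Phi+\Psi/s$ propagates along the characteristic comoving with the shock. The key observation that unlocks the estimate is to integrate by parts in $y$ writing $|\tilde U_{y}|=(u_{-}-\tilde U)_{y}$; since the boundary contributions vanish ($\Phi\in H^{2}$) and $\|u_{-}-\tilde U\|_{L^{\infty}}\le C\delta$ by Proposition 2.1, one obtains $\int|\tilde U_{y}|\Phi^{2}\,dy\le 2\int(u_{-}-\tilde U)|\Phi||\Phi_{y}|dy\le C\delta\,\|\Phi\|\,\|\Phi_{y}\|\le C\delta\sqrt{\eta_{0}}\,\|\Phi_{y}\|$. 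A judicious Young's inequality distributes this as $C(\eta_{0}^{1/2}+\delta^{2})\|\Phi_{y}\|^{2}$ plus the irreducible $C\delta$ constant appearing in \eqref{39.5}. Combined with the vanishing initial data \eqref{318.15} and integration of the resulting differential inequality in $\tau$, this closes Lemma 4.1.
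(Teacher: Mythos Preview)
Your multiplier choice creates a genuine obstruction that the paper's choice avoids entirely. You multiply $(\ref{351221.6})_{1}$ by $-p'(\tilde V)\Phi$ and $(\ref{351221.6})_{2}$ by $\Psi$; the paper instead multiplies $(\ref{351221.6})_{1}$ by $\Phi$ and $(\ref{351221.6})_{2}$ by $-\Psi/p'(\tilde V)$. With the paper's choice the hyperbolic cross terms are exactly $-\Phi\Psi_{y}-\Phi_{y}\Psi=-(\Phi\Psi)_{y}$, which integrates to zero, and the only profile-derivative contribution comes from $(\tfrac{1}{2|p'(\tilde V)|})_{\tau}\Psi^{2}$. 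Since $\tilde V_{\tau}=\tilde U_{y}<0$ and $p''>0$, this term is $\tfrac{p''(\tilde V)}{2|p'(\tilde V)|^{2}}|\tilde U_{y}|\Psi^{2}$ and lands on the left-hand side as the good dissipation; \emph{no} $|\tilde U_{y}|\Phi^{2}$ term is ever generated.

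With your multipliers, by contrast, the weight sits on $\Phi$ and you are forced into the ``bad'' leftover $\int_{0}^{\tau}\!\!\int p''(\tilde V)\,|s\tilde V_{y}|\,(\Phi+\Psi/s)^{2}\,dy\,ds$, whose $\Phi^{2}$ part has no intrinsic dissipation. Your proposed fix---integrating by parts via $|\tilde U_{y}|=(u_{-}-\tilde U)_{y}$ and then applying Young---does not close: after time integration it produces
\[
\int_{0}^{\tau}\!\!\int|\tilde U_{y}|\Phi^{2}\,dy\,ds
\;\le\; C\delta\!\int_{0}^{\tau}\|\Phi(s)\|\,\|\Phi_{y}(s)\|\,ds
\;\le\; C\delta\eta_{0}^{1/2}\!\int_{0}^{\tau}\|\Phi_{y}(s)\|\,ds,
\]
and any Young splitting of the last integrand leaves a residual that grows linearly in $\tau$ (e.g.\ $\epsilon\!\int_{0}^{\tau}\|\Phi_{y}\|^{2}ds+C\delta^{2}\eta_{0}\epsilon^{-1}\tau$), so you cannot reach the $\tau$-uniform constant $C\delta$ claimed in \eqref{39.5}. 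The difficulty is structural: the combination $\Phi+\Psi/s$ is the characteristic variable comoving with the shock, and weighting $\Phi$ rather than $\Psi$ exposes exactly this undamped mode. Switching to the paper's multipliers $(\Phi,\,-\Psi/p'(\tilde V))$ removes the problem at the outset and the rest of your outline (viscous integration by parts, the commutator bound via $|\tilde V_{y}|\le C\delta|\tilde U_{y}|^{1/2}$, and the treatment of $\int Q\Psi$ using \eqref{37.15} and $\|\Psi\|_{L^{\infty}}\le C\eta_{0}^{1/2}$) then goes through essentially as the paper does it.
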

\begin{proof}
Multiplying $\eqref{351221.6}_{1}$ by $\Phi$, $\eqref{351221.6}_{2}$ by $-\dfrac{\Psi}{p'(\tilde{V})}$, we obtain
\begin{equation}\label{332769.5}
\big(\frac{1}{2}\Phi^{2}\big)_{\tau}-\Phi\Psi_{y}=0,
\end{equation}
and
\begin{equation}\label{33sd2769.5}
-\Psi_{\tau}\Psi\dfrac{1}{p'(\tilde{V})}-\Psi\Phi_{y}=\dfrac{\Psi\Psi_{yy}}{\tilde{V}^{1+\alpha}|p'(\tilde{V})|}-
\dfrac{\Psi}{p'(\tilde{V})}Q.
\end{equation}
Since
\begin{equation}\label{33s.5}
\begin{array}{ll}
-\Psi_{\tau}\Psi\dfrac{1}{p'(\tilde{V})}-\Psi\Phi_{y}
&=-\big(\dfrac{1}{2p'(\tilde{V})}\Psi^{2}\big)_{\tau}+\big(\dfrac{1}{2p'(\tilde{V})}\big)_{\tau}\Psi^{2}-\Psi\Phi_{y}\\[5mm]
&=\big(\dfrac{1}{2|p'(\tilde{V})|}\Psi^{2}\big)_{\tau}-\dfrac{p''(\tilde{V})}{2(p'(\tilde{V}))^{2}}\tilde{V}\tau\Psi^{2}-\Psi\Phi_{y},
\end{array}
\end{equation}
and
\begin{equation}\label{33sd25}
\begin{array}{ll}
\dfrac{\Psi\Psi_{yy}}{\tilde{V}^{1+\alpha}|p'(\tilde{V})|}-\dfrac{\Psi}{p'(\tilde{V})}Q\\[5mm]
=\big(\dfrac{\Psi\Psi_{y}}{\tilde{V}^{1+\alpha}|p'(\tilde{V})|}\big)_{y}-\dfrac{\Psi_{y}^{2}}{\tilde{V}^{1+\alpha}|p'(\tilde{V})|}-
\big(\dfrac{1}{\tilde{V}^{1+\alpha}|p'(\tilde{V})|}\big)_{y}\Psi\Psi_{y}+\dfrac{\Psi}{|p'(\tilde{V})|}Q.
\end{array}
\end{equation}

Substituting \eqref{33s.5} and \eqref{33sd25} into \eqref{33sd2769.5} shows
\begin{equation}\label{366769.5}
\begin{array}{ll}
\big(\dfrac{1}{2|p'(\tilde{V})|}\Psi^{2}\big)_{\tau}-\dfrac{p''(\tilde{V})}{2|p'(\tilde{V})|^{2}}\tilde{V}\tau\Psi^{2}-\Phi_{y}\Psi\\[5mm]
=\big(\dfrac{\Psi\Psi_{y}}{\tilde{V}^{1+\alpha}|p'(\tilde{V})|}\big)_{y}-\dfrac{\Psi_{y}^{2}}{\tilde{V}^{1+\alpha}|p'(\tilde{V})|}-
\big(\dfrac{1}{\tilde{V}^{1+\alpha}|p'(\tilde{V})|}\big)_{y}\Psi\Psi_{y}+\dfrac{\Psi}{|p'(\tilde{V})|}Q.
\end{array}
\end{equation}

Adding the above two equations \eqref{332769.5}, \eqref{366769.5} then integrating it,  we have
\begin{equation}\label{36saa6769.5}
\begin{array}{ll}
\displaystyle\int^{\tau}_{0}\int(\frac{1}{2}\Phi^{2}+\dfrac{1}{2|p'(\tilde{V})|}\Psi^{2})_{\tau}dyds-\int^{\tau}_{0}\int\dfrac{p''(\tilde{V})}{2|p'(\tilde{V})|^{2}}
\displaystyle\tilde{V}\tau\Psi^{2}dyds+\int^{\tau}_{0}\int\dfrac{\Psi_{y}^{2}}{\tilde{V}^{1+\alpha}|p'(\tilde{V})|}dyds\\[5mm]
\displaystyle =\int^{\tau}_{0}\int\big(\dfrac{\Psi\Psi_{y}}{\tilde{V}^{1+\alpha}|p'(\tilde{V})|}+
\displaystyle \Phi\Psi\big)_{y}dyds-\int^{\tau}_{0}\int\big(\dfrac{1}{\tilde{V}^{1+\alpha}|p'(\tilde{V})|}\big)_{y}\Psi\Psi_{y}dyds\\[5mm]
\displaystyle +\int^{\tau}_{0}\int\dfrac{\Psi}{|p'(\tilde{V})|}Qdyds.
\end{array}
\end{equation}

We now estimate the terms in the above formula. Applying \eqref{33121.6} and Proposition 2.1, one can get
\begin{equation}\label{3769.5}
\begin{array}{ll}
\displaystyle-\int^{\tau}_{0}\int\dfrac{p''(\tilde{V})}{2|p'(\tilde{V})|^{2}}\tilde{V}\tau\Psi^{2}dyds
=\displaystyle\int^{\tau}_{0}\int\dfrac{p''(\tilde{V})}{2|p'(\tilde{V})|^{2}}|\tilde{U}_{y}|\Psi^{2}dyds,
\end{array}
\end{equation}
\begin{equation}\label{37ds69.5}
\begin{array}{ll}
\displaystyle\int^{\tau}_{0}\int\big(\dfrac{\Psi\Psi_{y}}{\tilde{V}^{1+\alpha}|p'(\tilde{V})|}+\Phi\Psi\big)_{y}dyds=0,
\end{array}
\end{equation}
and
\begin{equation}
\begin{array}{ll}\label{3454}
\displaystyle -\int^{\tau}_{0}\int\big(\dfrac{1}{\tilde{V}^{1+\alpha}|p'(\tilde{V})|}\big)_{y}\Psi\Psi_{y}dyds
&\displaystyle\leq C\int^{\tau}_{0}\int|\tilde{V}_{y}\Psi\Psi_{y}|dyds\\[4mm]
&\displaystyle\leq C\delta\int^{\tau}_{0}\int|\tilde{U}_{y}|^{\frac{1}{2}}|\Psi\Psi_{y}|dyds\\[4mm]
&\displaystyle\leq C\delta\int^{\tau}_{0}\int\Psi_{y}^{2}dyds+C\delta\int^{\tau}_{0}\int|\tilde{U}_{y}|\Psi^{2}dyds,
\end{array}
\end{equation}
here we have used the fact that
\begin{equation}
|\tilde{V}_{y}|=|\frac{1}{s}\tilde{V}_{\tau}|=|\frac{1}{s}\tilde{U}_{y}|
\leq C\delta|\tilde{U}_{y}|^{\frac{1}{2}}.
\end{equation}

It follows from \eqref{37.15} that
\begin{equation}\label{33219.5}
\begin{array}{ll}
\displaystyle -\int^{\tau}_{0}\int\dfrac{\Psi}{p'(\tilde{V})}Qdyds
\displaystyle\leq C\int^{\tau}_{0}\int|\Psi Q|dyds\\[5mm]
\displaystyle\leq C\int^{\tau}_{0}\int|\Psi\Phi^{2}_{y}|dyds+C\int^{\tau}_{0}\int
\displaystyle|\Psi\Phi_{y}\Psi_{yy}|dyds+C\delta^{2}\int^{\tau}_{0}\int e^{-c\delta|y|-c\delta|s|}|\Psi\Phi_{y}|dyds\\[5mm]
\displaystyle=:\sum\limits_{i=1}^{3} I_i.
\end{array}
\end{equation}

Using \eqref{387.125}, Cauchy inequality and Sobolev's interpolation inequality, we have
\begin{equation}
\begin{array}{ll}
\hspace{-3.6cm}\displaystyle I_{1}\leq\eta_{0}^{\frac{1}{2}}\int^{\tau}_{0}\|\Phi_{y}\|^{2}ds,
\end{array}
\end{equation}

\begin{equation}
\begin{array}{ll}
\displaystyle I_{2}\leq C\eta_{0}^{\frac{1}{2}}\int^{\tau}_{0}\|\Phi_{y}\|^{2}ds+C\eta_{0}^{\frac{1}{2}}\int^{\tau}_{0}\|\Psi_{yy}\|^{2}ds,
\end{array}
\end{equation}
and
\begin{equation}
\begin{array}{ll}
\displaystyle I_{3}&\displaystyle\leq C\delta^{2}\int^{\tau}_{0}\int e^{-2c\delta|y|-2c\delta|s|}\Psi^{2}dyds+
\displaystyle C\delta^{2}\int^{\tau}_{0}\int \Phi_{y}^{2}dyds\\[5mm]
&\displaystyle\leq C\delta^{2}\int^{\tau}_{0}e^{-2c\delta|s|}\|\Psi\|^{2}_{L^{\infty}}ds\int e^{-2c\delta|y|}dy
+C\delta^{2}\int^{\tau}_{0}\int \Phi_{y}^{2}dyds\\[5mm]
&\displaystyle\leq C\delta\int^{\tau}_{0}e^{-2c\delta|s|}\|\Psi\|\|\Psi_{y}\|ds
+C\delta^{2}\int^{\tau}_{0}\|\Phi_{y}\|^{2}ds\\[5mm]
&\displaystyle\leq C\delta\eta_{0}^{\frac{1}{2}}\int^{\tau}_{0}e^{-2c\delta|s|}\|\Psi_{y}\|ds
+C\delta^{2}\int^{\tau}_{0}\|\Phi_{y}\|^{2}ds\\[5mm]
&\displaystyle\leq C\delta^{2}\int^{\tau}_{0}e^{-4c\delta|s|}ds+C\eta_{0}\int^{\tau}_{0}\|\Psi_{y}\|^{2}ds
+C\delta^{2}\int^{\tau}_{0}\|\Phi_{y}\|^{2}ds\\[5mm]
&\displaystyle\leq C\delta+C\eta_{0}\int^{\tau}_{0}\|\Psi_{y}\|^{2}ds
+C\delta^{2}\int^{\tau}_{0}\|\Phi_{y}\|^{2}ds.
\end{array}
\end{equation}
Thus,
\begin{equation}\label{2121}
\begin{array}{ll}
\displaystyle -\int^{\tau}_{0}\int\dfrac{\Psi}{p'(\tilde{V})}Qdyds\\[5mm]
\leq
\displaystyle C(\eta_{0}^{\frac{1}{2}}+\delta^{2})\int^{\tau}_{0}\|\Phi_{y}\|^{2}ds+C\eta_{0}\int^{\tau}_{0}\|\Psi_{y}\|^{2}ds
\displaystyle +C\eta_{0}^{\frac{1}{2}}\int^{\tau}_{0}\|\Psi_{yy}\|^{2}ds+C\delta.
\end{array}
\end{equation}

Now returning to \eqref{36saa6769.5}, we can easily obtain the estimate \eqref{39.5}.
\end{proof}
\begin{lemma}
If $\delta_{0}$ and $\eta_{0}$ are suitable small, for  $0\leq \tau\leq T$, it holds for $\delta\leq\delta_{0}$ that
\begin{equation}\label{390.5}
\begin{array}{ll}
\displaystyle \|\Phi(\tau)\|_{H^{1}}^{2}+\|\Psi(\tau)\|^{2}+\int_{0}^{\tau}\int |\tilde{U_{y}}| \Psi^{2}dyds+\int_{0}^{\tau}\|\Phi_{y}(s)\|^{2}ds+\int_{0}^{\tau}\|\Psi_{y}(s)\|^{2}ds\\[5mm]
\leq C\delta
\displaystyle+C\eta_{0}^{\frac{1}{2}}\int_{0}^{\tau}\|\Psi_{yy}(s)
\|^{2}ds.
\end{array}
\end{equation}
\end{lemma}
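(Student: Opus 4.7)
The plan is to derive a separate estimate for $\|\Phi_y(\tau)\|^2$ and $\int_0^\tau\|\Phi_y(s)\|^2\,ds$, and then combine it with Lemma~4.1 to absorb the $C(\eta_0^{1/2}+\delta^2)\int_0^\tau\|\Phi_y\|^2\,ds$ term on the right-hand side of \eqref{39.5}.

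The key step is to multiply the momentum equation $\eqref{351221.6}_{2}$ by $-\Phi_y$ and integrate over $y\in R$ and $s\in[0,\tau]$. The term $-p'(\tilde V)\Phi_y^2=|p'(\tilde V)|\Phi_y^2$ is the good positive definite contribution, since $p'<0$ by \eqref{11.2}. Two dangerous cross terms appear: $-\Psi_\tau\Phi_y$ and $-\tilde V^{-1-\alpha}\Psi_{yy}\Phi_y$. For the first, use $\Phi_{y\tau}=\Psi_{yy}$ (obtained from differentiating $\eqref{351221.6}_{1}$) to write $-\Psi_\tau\Phi_y=-(\Psi\Phi_y)_\tau+\Psi\Psi_{yy}$, and integrate the last factor by parts in $y$ to produce $-\Psi_y^2$; the time-boundary term $-\int\Psi(\tau)\Phi_y(\tau)\,dy$ is handled by Cauchy, giving $\tfrac14\|\Phi_y(\tau)\|^2+C\|\Psi(\tau)\|^2$, with the second piece already controlled by Lemma~4.1.

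For the viscous cross term, the crucial trick is again to exploit $\Psi_{yy}=\Phi_{y\tau}$:
\[
-\int_0^\tau\!\!\int\frac{\Psi_{yy}\Phi_y}{\tilde V^{1+\alpha}}\,dy\,ds
=-\int_0^\tau\!\!\int\frac{(\Phi_y^2)_s}{2\tilde V^{1+\alpha}}\,dy\,ds
=-\int\frac{\Phi_y^2(\tau)}{2\tilde V^{1+\alpha}}\,dy+\int_0^\tau\!\!\int\Phi_y^2\,\partial_s\!\left(\frac{1}{2\tilde V^{1+\alpha}}\right)dy\,ds,
\]
using $\Phi_y(y,0)=0$. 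Moving the boundary term to the left-hand side yields a uniformly positive weight times $\|\Phi_y(\tau)\|^2$, while the remaining integral is $O(\delta^2)\int_0^\tau\|\Phi_y\|^2\,ds$ by Proposition~2.1, hence absorbable into $\int_0^\tau\!\!\int|p'(\tilde V)|\Phi_y^2$ for $\delta_0$ small. The contribution of $Q\Phi_y$ is split via \eqref{37.15}: the $|\Phi_y|^3$ and $\delta^2 e^{-c\delta|y|-c\delta|s|}|\Phi_y|^2$ pieces are absorbed using smallness of $\eta_0$ and $\delta$, while $|\Phi_y^2\Psi_{yy}|$ is bounded via $\|\Phi_y\|_{L^\infty}\leq C\eta_0^{1/2}$, which follows from the a priori assumption \eqref{387.125}; this produces exactly the allowed term $C\eta_0^{1/2}\int_0^\tau\|\Psi_{yy}\|^2\,ds$.

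Putting everything together, the new inequality reads schematically $\|\Phi_y(\tau)\|^2+\int_0^\tau\|\Phi_y\|^2\,ds\leq C\|\Psi(\tau)\|^2+C\int_0^\tau\|\Psi_y\|^2\,ds+C\eta_0^{1/2}\int_0^\tau\|\Psi_{yy}\|^2\,ds+C\delta$. Taking a suitable linear combination with \eqref{39.5} and choosing $\delta_0,\eta_0$ small enough to absorb the $C(\eta_0^{1/2}+\delta^2)\int_0^\tau\|\Phi_y\|^2\,ds$ term on the right of \eqref{39.5} then yields \eqref{390.5}. The main obstacle is precisely the viscous cross term $\tilde V^{-1-\alpha}\Psi_{yy}\Phi_y$: a naive Cauchy--Schwarz would introduce $C\int_0^\tau\|\Psi_{yy}\|^2\,ds$ with an order-one constant and ruin the scheme; rewriting it as a perfect time derivative via $\Psi_{yy}=\Phi_{y\tau}$ is the decisive trick, simultaneously yielding the pointwise-in-time gain $\|\Phi_y(\tau)\|^2$ on the left and leaving only an $O(\delta^2)$ remainder in time.
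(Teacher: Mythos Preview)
Your proposal is correct and follows essentially the same route as the paper. The paper first rewrites $\eqref{351221.6}_{2}$ as $\tilde V^{-1-\alpha}\Phi_{y\tau}-p'(\tilde V)\Phi_y-\Psi_\tau=-Q$ (using $\Psi_{yy}=\Phi_{y\tau}$) and then multiplies by $\Phi_y$, whereas you multiply $\eqref{351221.6}_{2}$ by $-\Phi_y$ first and invoke $\Psi_{yy}=\Phi_{y\tau}$ afterwards; the resulting identities and estimates (the time-derivative structure producing $\int\tfrac{\Phi_y^2}{2\tilde V^{1+\alpha}}$, the treatment of $-\Psi_\tau\Phi_y$ via $(\Psi\Phi_y)_\tau$ and integration by parts, and the handling of $Q\Phi_y$ through \eqref{37.15}) coincide. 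The only cosmetic difference is that the paper's intermediate inequality \eqref{3010--.5} has no explicit $C\delta$ term (it enters only after combining with Lemma~4.1), but this does not affect the argument.
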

\begin{proof}
It follows from $\eqref{351221.6}_{1}$ that $\Psi_{y}=\Phi_{\tau}$. Substituting it into $\eqref{351221.6}_{2}$, we can obtain
\begin{equation}\label{34590.5}
\begin{array}{ll}
\dfrac{1}{\tilde{V}^{1+\alpha}}\Phi_{y\tau}-p'(\tilde{V})\Phi_{y}-\Psi_{\tau}=-Q.
\end{array}
\end{equation}
Multiplying \eqref{34590.5} by $\Phi_{y}$, then integrating the result formula, we have
\begin{equation}\label{345210.5}
\begin{array}{ll}
\displaystyle\int_{0}^{\tau}\int\frac{1}{2\tilde{V}^{1+\alpha}}(\Phi_{y}^{2})_{\tau}dyds-\int_{0}^{\tau}\int p'(\tilde{V})\Phi_{y}^{2}dyds
\displaystyle-\int_{0}^{\tau}\int\Psi_{\tau}\Phi_{y}dyds
\displaystyle=-\int_{0}^{\tau}\int Q\Phi_{y}dyds.
\end{array}
\end{equation}

The left side of \eqref{345210.5} is
\begin{equation}\label{36210.5}
\begin{array}{ll}
\displaystyle\int_{0}^{\tau}\int(\frac{1}{2\tilde{V}^{1+\alpha}}\Phi_{y}^{2}-\Psi\Phi_{y})_{\tau}dyds-
\displaystyle\int_{0}^{\tau}\int(\frac{1}{2\tilde{V}^{1+\alpha}})_{\tau}\Phi_{y}^{2}dyds
\displaystyle+\int_{0}^{\tau}\int\Psi\Phi_{y\tau}dyds\\[5mm]
\displaystyle+\int_{0}^{\tau}\int|p'(\tilde{V})|\Phi_{y}^{2}dyds\\[5mm]
\displaystyle=\int(\frac{1}{2\tilde{V}^{1+\alpha}}\Phi_{y}^{2}-\Psi\Phi_{y})dy-
\displaystyle\int_{0}^{\tau}\int(\frac{1}{2\tilde{V}^{1+\alpha}})_{\tau}\Phi_{y}^{2}dyds
\displaystyle-\int_{0}^{\tau}\int\Psi_{y}^{2}dyds\\[5mm]
\displaystyle+\int_{0}^{\tau}\int|p'(\tilde{V})|\Phi_{y}^{2}dyds,
\end{array}
\end{equation}
then we deduce
\begin{equation}\label{30-.5}
\begin{array}{ll}
\displaystyle\int\frac{1}{2\tilde{V}^{1+\alpha}}\Phi_{y}^{2}dy+\int_{0}^{\tau}\int|p'(\tilde{V})|\Phi_{y}^{2}dyds\\[5mm]
=\displaystyle\int_{0}^{\tau}\int\Psi_{y}^{2}dyds+\int\Psi\Phi_{y}dy+\int_{0}^{\tau}\int(\frac{1}{2\tilde{V}^{1+\alpha}})_{\tau}\Phi_{y}^{2}dyds
\displaystyle-\int_{0}^{\tau}\int Q\Phi_{y}dyds\\[5mm]
=:\displaystyle\int_{0}^{\tau}\int\Psi_{y}^{2}dyds+\sum\limits_{i=4}^{6} I_i.
\end{array}
\end{equation}

Employing  \eqref{37.15}, Proposition 2.1, $\epsilon$-Young inequality and the Sobolev interpolation inequality, we can get the estimates of $I_{4}$-$I_{6}$, respectively.
\begin{equation}\label{310-.5}
\begin{array}{ll}
I_{4}\leq\epsilon\|\Phi_{y}\|^{2}+C\|\Psi\|^{2},
\end{array}
\end{equation}
where $\epsilon$ is a suitable small and fixed constant.
\begin{equation}\label{3010-.5}
\begin{array}{ll}
\displaystyle I_{5}\leq C\delta^{2}\int_{0}^{\tau}\|\Phi_{y}\|^{2}ds,
\end{array}
\end{equation}
and
\begin{equation}\label{3010-.5}
\begin{array}{ll}
\displaystyle I_{6}&\displaystyle\leq C\int_{0}^{\tau}\int|\Phi_{y}|^{3}dyds+C\int_{0}^{\tau}\int |\Psi_{yy}|\Phi_{y}^{2}dyds+C\delta^{2}\int_{0}^{\tau}\int e^{-c\delta|y|-c\delta|s|}\Phi_{y}^{2}dyds\\[5mm]
&\displaystyle\leq C(\eta_{0}^{\frac{1}{2}}+\delta^{2})\int_{0}^{\tau}\|\Phi_{y}\|^{2}ds+C\eta_{0}^{\frac{1}{2}}\int_{0}^{\tau}\|\Psi_{yy}\|^{2}ds.
\end{array}
\end{equation}

Taking the above estimates into \eqref{30-.5}, we have
\begin{equation}\label{3010--.5}
\begin{array}{ll}
\displaystyle\|\Phi_{y}\|^{2}+\int_{0}^{\tau}\|\Phi_{y}\|^{2}ds\\[5mm]
\displaystyle\leq \epsilon\|\Phi_{y}\|^{2}+C (\epsilon)\|\Psi\|^{2}+
\displaystyle C(\eta_{0}^{\frac{1}{2}}+\delta^{2})\int_{0}^{\tau}\|\Phi_{y}\|^{2}ds+\int_{0}^{\tau}\|\Psi_{y}\|^{2}ds
\displaystyle +C\eta_{0}^{\frac{1}{2}}\int_{0}^{\tau}\|\Psi_{yy}\|^{2}ds.
\end{array}
\end{equation}
By using the smallness of $\epsilon,\delta$ and $\eta_{0}$, we have
\begin{equation}\label{3010--.5}
\begin{array}{ll}
\displaystyle\|\Phi_{y}\|^{2}+\int_{0}^{\tau}\|\Phi_{y}\|^{2}ds
\leq \displaystyle C\|\Psi\|^{2}
+\int_{0}^{\tau}\|\Psi_{y}\|^{2}ds+C\eta_{0}^{\frac{1}{2}}\int_{0}^{\tau}\|\Psi_{yy}\|^{2}ds.
\end{array}
\end{equation}
Then combining Lemma 4.1 with \eqref{3010--.5}, we complete the proof of Lemma 4.2.
\end{proof}

\begin{lemma}
If $\delta_{0}$ and $\eta_{0}$ are suitable small, for  $0\leq \tau\leq T$, it holds for $\delta\leq\delta_{0}$ that
\begin{equation}\label{3090.5}
\begin{array}{ll}
\displaystyle \|\phi\|_{H^{1}}^{2}+\|\psi\|^{2}+\int_{0}^{\tau}\|\phi_{y}(s)\|^{2}ds+\int_{0}^{\tau}\|\psi_{y}(s)\|^{2}ds\leq C\delta^{\frac{1}{3}}+C\eta_{0}^{\frac{1}{2}}\int_{0}^{\tau}\|\psi_{yy}\|^{2}ds.
\end{array}
\end{equation}
\end{lemma}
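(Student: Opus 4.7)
The plan is to upgrade the energy estimate of Lemma~4.2 by one derivative. Since Lemma~4.2 already controls $\|\phi\|^2=\|\Phi_y\|^2$ pointwise in $\tau$ and $\int_0^\tau\|\psi\|^2\,ds=\int_0^\tau\|\Psi_y\|^2\,ds$, the new quantities demanded by \eqref{3090.5} are $\|\psi\|^2=\|\Psi_y\|^2$ and $\|\phi_y\|^2=\|\Phi_{yy}\|^2$ pointwise in $\tau$, together with the dissipations $\int_0^\tau\|\phi_y\|^2\,ds$ and $\int_0^\tau\|\psi_y\|^2\,ds$. In particular, the bad term $C\eta_0^{1/2}\int_0^\tau\|\Psi_{yy}\|^2\,ds$ carried on the right of Lemma~4.2 must now be absorbed into the LHS at the cost of an even higher-order bad term $C\eta_0^{1/2}\int_0^\tau\|\psi_{yy}\|^2\,ds$.

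For the $\|\psi\|^2$-estimate and the dissipation $\int_0^\tau\|\psi_y\|^2\,ds$, I would multiply $\eqref{351221.6}_2$ by $-\Psi_{yy}$ and integrate in $(y,\tau)$. The principal part produces $\tfrac12\|\psi\|^2+\int_0^\tau\!\int \psi_y^2/\tilde V^{1+\alpha}\,dy\,ds$; the pressure term, after integration by parts, supplies a cross product $\int p'(\tilde V)\phi_y\psi\,dy\,ds$ to be absorbed by the $\|\phi_y\|^2$-dissipation built in the next step, plus a lower-order profile-weighted piece controlled by $|\tilde V_y|\le C\delta^2 e^{-c\delta|y-s\tau|}$ from Proposition~2.1; and the nonlinear contribution $\int Q\Psi_{yy}$ is treated exactly as in Lemma~4.1 using \eqref{37.15}.

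For the $\|\phi_y\|^2$-estimate and $\int_0^\tau\|\phi_y\|^2\,ds$, I would imitate the strategy of Lemma~4.2 one order higher: differentiate \eqref{34590.5} in $y$ and multiply by $\Phi_{yy}=\phi_y$. The leading part gives $\partial_\tau\!\int\phi_y^2/(2\tilde V^{1+\alpha})\,dy+\int|p'(\tilde V)|\phi_y^2\,dy$, and the cross term is rewritten via $\psi_\tau\phi_y=\partial_\tau(\psi\phi_y)-\psi\phi_{y\tau}$ with $\phi_{y\tau}=\psi_{yy}$. Integrating $\psi\psi_{yy}$ by parts in $y$ then contributes $+\int_0^\tau\|\psi_y\|^2\,ds$ on the right of this inequality; this term is absorbed against the matching dissipation from Step~2 after the two inequalities are combined with a sufficiently large weight on Step~2. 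The nonlinear pieces coming from $Q_y$---typically $\int|\phi_y|^3$ and $\int|\phi_y^2\psi_{yy}|$---are handled by $\varepsilon$-Young, the a priori bound \eqref{387.125}, and the Sobolev interpolation $\|\phi_y\|_{L^\infty}\le C\|\phi_y\|^{1/2}\|\phi_{yy}\|^{1/2}$, producing exactly the tolerated $C\eta_0^{1/2}\int_0^\tau\|\psi_{yy}\|^2\,ds$ on the right of \eqref{3090.5}.

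Finally, adding these two new estimates to Lemma~4.2 and shrinking $\varepsilon$, $\delta_0$, $\eta_0$ so that all bad $\|\phi_y\|^2$- and $\|\psi_y\|^2$-contributions are absorbed into the LHS dissipations yields \eqref{3090.5}. The main obstacle I anticipate is the sharp $\delta$-dependence of the profile-weighted integrals: after the additional $y$-differentiations, certain inhomogeneous terms carrying $\tilde V_y$ and $\tilde V_{yy}$ come with less favourable powers of $\delta$ than those encountered in Lemmas~4.1--4.2, and a cubic Young inequality is required to balance them against the $H^2$-quantities, accounting for the weaker rate $C\delta^{1/3}$ appearing on the right-hand side.
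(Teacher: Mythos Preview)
Your plan is sound and parallels the paper's argument closely, though organized a bit differently. The paper first differentiates \eqref{3521.6} in $y$ to obtain a system for $(\phi,\psi)$ directly (system \eqref{3=-0-09} with remainder $Q_1$), then multiplies the first equation by $\phi$ and the second by $\psi/|p'(\tilde V)|$; this weight is chosen so that the cross-terms $\phi\psi_y+\psi\phi_y$ combine into a perfect $y$-derivative. Your Step~2 (testing $\eqref{351221.6}_2$ against $-\Psi_{yy}$) produces essentially the same estimate, except that the pressure cross-term $\int p'(\tilde V)\phi_y\psi$ does not cancel and must be split by Young and fed into Step~3 as you describe---this works because $\int_0^\tau\|\psi\|^2\,ds=\int_0^\tau\|\Psi_y\|^2\,ds$ is already controlled by Lemma~4.2. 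For the $\|\phi_y\|^2$-estimate, the paper substitutes $\psi_y=\phi_\tau$ into $\eqref{3=-0-09}_2$ to isolate a parabolic equation for $\phi_y$ (equation \eqref{316df=r}) and multiplies by $\phi_y$; your Step~3 (differentiating \eqref{34590.5} in $y$ and multiplying by $\phi_y$) is the same computation with the nonlinearity organized as $Q_y$ rather than $Q_1$. Both routes close in the same way.

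Two small corrections to your sketch. First, the cubic terms arising from $Q_y\cdot\phi_y$ are of the type $\int|\phi\,\phi_y^2|$, $\int|\phi_y^2\,\psi_y|$ and $\int|\phi\,\phi_y\,\psi_{yy}|$, \emph{not} $\int|\phi_y|^3$ or $\int|\phi_y^2\,\psi_{yy}|$; the term $\int|\phi_y|^3$ would actually be dangerous because $\|\phi_y\|_{L^\infty}$ requires control of $\|\phi_{yy}\|=\|\Phi_{yyy}\|$, which is \emph{not} part of the a~priori bound \eqref{387.125}---fortunately it does not occur. Second, the $\delta^{1/3}$ in the paper does not come from profile second derivatives but from the single inhomogeneous piece $(p'(v)-p'(\tilde V))\tilde V_y$ in $Q_1$ tested against $\psi$: bounding $|p'(v)-p'(\tilde V)|\le C$ leaves $\int_0^\tau\!\int|\psi|\,|\tilde V_y|\,dy\,ds\le C\delta\!\int_0^\tau e^{-c\delta s}\|\psi\|^{1/2}\|\psi_y\|^{1/2}\,ds$, and a Young inequality with exponents $(4,4/3)$ then yields $C\delta^{4/3}\!\int_0^\tau e^{-4c\delta s/3}\,ds\le C\delta^{1/3}$ (see the $K_3$-estimate in \eqref{3dfrf}). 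In your formulation this term sits inside $Q_y$ rather than $Q_1$, but the same splitting handles it.
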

\begin{proof}
Differentiate \eqref{3521.6} with respect to $y$ gives
\begin{equation}\label{3=-09}
\left\{\begin{array}{l}
\phi_{\tau}-\psi_{y}=0,\\
\psi_{\tau}+p'(v)v_{y}-p'(\tilde{V})\tilde{V}_{y}=(\dfrac{u_{y}}{v^{1+\alpha}})_{y}-(\dfrac{\tilde{U}_{y}}{\tilde{V}^{1+\alpha}})_{y},
\end{array}\right.
\end{equation}
where
\begin{equation}\label{3==-09}
\begin{array}{ll}
p'(v)v_{y}-p'(\tilde{V})\tilde{V}_{y}=(p'(v)-p'(\tilde{V}))v_{y}+p'(\tilde{V})\phi_{y},
\end{array}
\end{equation}
and
\begin{equation}\label{3==-09}
\begin{array}{ll}
(\dfrac{u_{y}}{v^{1+\alpha}})_{y}-(\dfrac{\tilde{U}_{y}}{\tilde{V}^{1+\alpha}})_{y}
&=(\dfrac{\psi_{y}}{v^{1+\alpha}})_{y}+[\tilde{U}_{y}(\dfrac{1}{v^{1+\alpha}}-\dfrac{1}{\tilde{V}^{1+\alpha}})]_{y}.
\end{array}
\end{equation}
Thus \eqref{3=-09} can be rewritten as
\begin{equation}\label{3=-0-09}
\left\{\begin{array}{l}
\phi_{\tau}-\psi_{y}=0,\\
\psi_{\tau}+p'(\tilde{V})\phi_{y}=(\dfrac{\psi_{y}}{v^{1+\alpha}})_{y}+Q_{1},
\end{array}\right.
\end{equation}
where
\begin{equation}\label{3==}
\begin{array}{l}
Q_{1}=(\tilde{U}_{y}(\dfrac{1}{v^{1+\alpha}}-\dfrac{1}{\tilde{V}^{1+\alpha}}))_{y}-(p'(v)-p'(\tilde{V}))v_{y}.
\end{array}
\end{equation}

Multiplying $\eqref{3=-0-09}_{1}$ by $\phi$, $\eqref{3=-0-09}_{2}$ by $\frac{\psi}{|p'(\tilde{V})|}$ and adding them up, then integrating the result formula on
$[0,\tau]\times R$, we have
\begin{equation}\label{3==}
\begin{array}{l}
\displaystyle \int_{0}^{\tau}\int (\phi\phi_{\tau}-\phi\psi_{y}+\frac{1}{|p'(\tilde{V})|}\psi\psi_{\tau}-\psi\phi_{y})dyds\\[5mm]
\displaystyle=\int_{0}^{\tau}\int\frac{\psi}{|p'(\tilde{V})|}(\dfrac{\psi_{y}}{v^{1+\alpha}})_{y}dyds+
\int_{0}^{\tau}\int\frac{\psi}{|p'(\tilde{V})|}Q_{1}dyds.
\end{array}
\end{equation}

The left side of \eqref{3==} is
\begin{equation}\label{3=12=}
\begin{array}{l}
\displaystyle\int_{0}^{\tau}\int (\phi\phi_{\tau}-\phi\psi_{y}+\frac{1}{|p'(\tilde{V})|}\psi\psi_{\tau}-\psi\phi_{y})dyds\\[5mm]
\displaystyle=\frac{1}{2}\int (\phi^{2}+\frac{1}{|p'(\tilde{V})|}\psi^{2})dy-
\displaystyle\int_{0}^{\tau}\int\frac{p''(\tilde{V})}{2|p'(\tilde{V})|^{2}}\tilde{V}_{\tau}\psi^{2}dyds.
\end{array}
\end{equation}

Using the integration by parts, one can get
\begin{equation}\label{34=12=}
\begin{array}{l}
\displaystyle\int_{0}^{\tau}\int\frac{\psi}{|p'(\tilde{V})|}(\dfrac{\psi_{y}}{v^{1+\alpha}})_{y}dyds+
\displaystyle\int_{0}^{\tau}\int\frac{\psi}{|p'(\tilde{V})|}Q_{1}dyds\\[5mm]
\displaystyle=-\int_{0}^{\tau}\int\frac{1}{|p'(\tilde{V})|}\dfrac{1}{v^{1+\alpha}}\psi_{y}^{2}dyds-
\displaystyle\int_{0}^{\tau}\int(\frac{1}{|p'(\tilde{V})|})_{y}\dfrac{1}{v^{1+\alpha}}\psi\psi_{y}dyds
\displaystyle+\int_{0}^{\tau}\int\frac{\psi}{|p'(\tilde{V})|}Q_{1}dyds.

\end{array}
\end{equation}

Putting the above two formula to \eqref{3==} leads to
\begin{equation}\label{3ew}
\begin{array}{l}
\displaystyle\frac{1}{2}\int (\phi^{2}+\frac{1}{|p'(\tilde{V})|}\psi^{2})dy+
\displaystyle\int_{0}^{\tau}\int\frac{1}{|p'(\tilde{V})|}\dfrac{1}{v^{1+\alpha}}\psi_{y}^{2}dyds\\[5mm]
\displaystyle=\int_{0}^{\tau}\int\frac{p''(\tilde{V})}{2|p'(\tilde{V})|^{2}}\tilde{V}_{\tau}\psi^{2}dyds-
\displaystyle\int_{0}^{\tau}\int(\frac{1}{|p'(\tilde{V})|})_{y}\dfrac{1}{v^{1+\alpha}}\psi\psi_{y}dyds
\displaystyle+\int_{0}^{\tau}\int\frac{\psi}{|p'(\tilde{V})|}Q_{1}dyds\\[5mm]
\displaystyle=:\sum\limits_{i=7}^{9} I_i.
\end{array}
\end{equation}

Using Propostion 2.1 and Cauchy inequality, we have
\begin{equation}\label{3eww}
\begin{array}{l}
\displaystyle I_{7}\leq C\delta^{2}\int_{0}^{\tau}\int\psi^{2}dyds,
\end{array}
\end{equation}
and
\begin{equation}\label{332eww}
\begin{array}{l}
\displaystyle I_{8}\leq C\delta^{2}\int_{0}^{\tau}\int\psi^{2}dyds+C\delta^{2}\int_{0}^{\tau}\int\psi_{y}^{2}dyds.
\end{array}
\end{equation}

It follows from the  integration by parts that
\begin{equation}\label{3e-02ww}
\begin{array}{l}
\displaystyle I_{9}
\displaystyle=-\int_{0}^{\tau}\int(\frac{1}{|p'(\tilde{V})|})_{y}\psi\tilde{U}_{y}(\dfrac{1}{v^{1+\alpha}}-\dfrac{1}{\tilde{V}^{1+\alpha}})dyds
\displaystyle-\int_{0}^{\tau}\int\frac{1}{|p'(\tilde{V})|}\psi_{y}\tilde{U}_{y}(\dfrac{1}{v^{1+\alpha}}-\dfrac{1}{\tilde{V}^{1+\alpha}})dyds\\[5mm]
\displaystyle-\int_{0}^{\tau}\int\frac{\psi}{|p'(\tilde{V})|}(p'(v)-p'(\tilde{V}))(\phi_{y}+\tilde{V}_{y})dyds\\[5mm]
\displaystyle=:\sum\limits_{i=1}^{3} K_i.
\end{array}
\end{equation}
\begin{equation}\label{3df}
\begin{array}{ll}
\displaystyle K_{1}&\displaystyle\leq C\int_{0}^{\tau}\int|\tilde{V}_{y}\psi\tilde{U}_{y}\phi|dyds
\displaystyle\leq C\delta^{4}\int_{0}^{\tau}\int|\psi\phi|dyds\\[5mm]
\displaystyle&\displaystyle\leq C\delta^{4}\int_{0}^{\tau}\int\psi^{2}dyds+C\delta^{4}\int_{0}^{\tau}\int\phi^{2}dyds,
\end{array}
\end{equation}
\begin{equation}\label{3drf}
\begin{array}{ll}
\displaystyle K_{2}&\displaystyle\leq C\delta^{2}\int_{0}^{\tau}\int|\psi_{y}\phi|dyds
\displaystyle\leq C\delta^{2}\int_{0}^{\tau}\int\psi_{y}^{2}dyds+C\delta^{2}\int_{0}^{\tau}\int\phi^{2}dyds,
\end{array}
\end{equation}
and
\begin{equation}\label{3dfrf}
\begin{array}{ll}
\displaystyle K_{3}&\displaystyle\leq C\int_{0}^{\tau}\int|\psi(\phi_{y}+\tilde{V}_{y})|dyds\\[5mm]
&\displaystyle\leq C\int_{0}^{\tau}\int|\psi\phi_{y}|dyds+ C\int_{0}^{\tau}\int|\psi\tilde{V}_{y}|dyds\\[5mm]
&\displaystyle\leq \epsilon\int_{0}^{\tau}\|\phi_{y}\|^{2}ds+C\int_{0}^{\tau}\|\psi\|^{2}ds+C\delta^{2}\int_{0}^{\tau}
\displaystyle e^{-c\delta|s|}\|\psi\|_{L^{\infty}}ds\int e^{-c\delta|y|}dy\\[5mm]
&\displaystyle\leq \epsilon\int_{0}^{\tau}\|\phi_{y}\|^{2}ds+C\int_{0}^{\tau}\|\psi\|^{2}ds+
\displaystyle C\delta\int_{0}^{\tau}e^{-c\delta|s|}\|\psi\|^{\frac{1}{2}}|\psi_{y}\|^{\frac{1}{2}}ds\\[5mm]
&\displaystyle\leq \epsilon\int_{0}^{\tau}\|\phi_{y}\|^{2}ds+C\int_{0}^{\tau}\|\psi\|^{2}ds+
\displaystyle\sup_{\tau\in[0,T]}\|\psi\|^{2}\int_{0}^{\tau}\|\psi_{y}\|^{2}ds+C\delta^{\frac{4}{3}}
\displaystyle\int_{0}^{\tau}e^{-\frac{4}{3}c\delta|s|}ds\\[5mm]
&\displaystyle\leq\epsilon\int_{0}^{\tau}\|\phi_{y}\|^{2}ds+C\int_{0}^{\tau}\|\psi\|^{2}ds+\eta_{0}
\int_{0}^{\tau}\|\psi_{y}\|^{2}ds+C\delta^{\frac{1}{3}},
\end{array}
\end{equation}
where $\epsilon$ is a  suitable small and fixed constant.

Putting $K_1,K_2$ and $K_3$ into \eqref{3e-02ww}, we can get that
\begin{equation}\label{3df=rf}
\begin{array}{ll}
\displaystyle I_{9}&\displaystyle\leq C(1+\delta^{4})\int_{0}^{\tau}\|\psi\|^{2}ds+C\delta^{2}\int_{0}^{\tau}\|\phi\|^{2}ds
\displaystyle+C(\eta_{0}+\delta^{2})\int_{0}^{\tau}\|\psi_{y}\|^{2}ds\\[5mm]
&\displaystyle+\epsilon\int_{0}^{\tau}\|\phi_{y}\|^{2}ds+C\delta^{\frac{1}{3}}.
\end{array}
\end{equation}

Returning to \eqref{3ew} and applying Lemma 4.2, one can obtain
\begin{equation}\label{3r2=2f}
\begin{array}{ll}
\displaystyle\|\phi\|^{2}+\|\psi\|^{2}+\int_{0}^{\tau}\|\psi_{y}\|^{2}ds
\displaystyle\leq C\delta^{\frac{1}{3}}+\epsilon\int_{0}^{\tau}\|\phi_{y}\|^{2}ds.
\end{array}
\end{equation}

On the other hand, substituting $\eqref{3=-0-09}_{1}$ into $\eqref{3=-0-09}_{2}$ gives
\begin{equation}\label{3df=r}
\begin{array}{ll}
\psi_{\tau}+p'(\tilde{V})\phi_{y}
&=(\psi_{y}(\dfrac{1}{v^{1+\alpha}}-\dfrac{1}{\tilde{V}^{1+\alpha}}))_{y}+(\dfrac{\psi_{y}}{\tilde{V}^{1+\alpha}})_{y}+Q_{1}\\[5mm]
&=(\psi_{y}(\dfrac{1}{v^{1+\alpha}}-\dfrac{1}{\tilde{V}^{1+\alpha}}))_{y}+\dfrac{\phi_{y\tau}}{\tilde{V}^{1+\alpha}}+
(\dfrac{1}{\tilde{V}^{1+\alpha}})_{y}\psi_{y}+Q_{1},
\end{array}
\end{equation}
i.e.
\begin{equation}\label{316df=r}
\begin{array}{ll}
\dfrac{\phi_{y\tau}}{\tilde{V}^{1+\alpha}}-\psi_{\tau}-p'(\tilde{V})\phi_{y}=-(\dfrac{1}{\tilde{V}^{1+\alpha}})_{y}\psi_{y}-
(\psi_{y}(\dfrac{1}{v^{1+\alpha}}-\dfrac{1}{\tilde{V}^{1+\alpha}}))_{y}-Q_{1}.
\end{array}
\end{equation}

Multiplying  \eqref{316df=r} by $\phi_{y}$,  we have
\begin{equation}\label{316fr}
\begin{array}{ll}
\displaystyle\int_{0}^{\tau}\int\dfrac{1}{2\tilde{V}^{1+\alpha}}(\phi_{y}^{2})_{\tau}-\phi_{y}\psi_{\tau}-p'(\tilde{V})\phi_{y}^{2}dyds\\[5mm]
\displaystyle=\int_{0}^{\tau}\int-(\dfrac{1}{\tilde{V}^{1+\alpha}})_{y}\psi_{y}\phi_{y}-
(\psi_{y}(\dfrac{1}{v^{1+\alpha}}-\dfrac{1}{\tilde{V}^{1+\alpha}}))_{y}\phi_{y}-Q_{1}\phi_{y}dyds.
\end{array}
\end{equation}

Integrating by parts and using $\eqref{3=-0-09}_{1}$ yield
\begin{equation}\label{36fr}
\begin{array}{ll}
\displaystyle\int_{0}^{\tau}\int\dfrac{1}{2\tilde{V}^{1+\alpha}}(\phi_{y}^{2})_{\tau}-\phi_{y}\psi_{\tau}-p'(\tilde{V})\phi_{y}^{2}dyds\\[5mm]
\displaystyle=\int\dfrac{\phi_{y}^{2}}{2\tilde{V}^{1+\alpha}}dy-
\displaystyle\int_{0}^{\tau}\int(\dfrac{1}{2\tilde{V}^{1+\alpha}})_{\tau}\phi_{y}^{2}dyds-\int\phi_{y}\psi dy\\[5mm]
\displaystyle+\int_{0}^{\tau}\int\psi_{yy}\psi dyds+\int_{0}^{\tau}\int|p'(\tilde{V})|\phi_{y}^{2}dyds.
\end{array}
\end{equation}
Returning to \eqref{316fr}, we have
\begin{equation}\label{36fr}
\begin{array}{ll}
\displaystyle\int(\dfrac{\phi_{y}^{2}}{2\tilde{V}^{1+\alpha}}-\phi_{y}\psi)dy+\int_{0}^{\tau}\int|p'(\tilde{V})|\phi_{y}^{2}dyds\\[5mm]
\displaystyle=-\int_{0}^{\tau}\int\psi_{yy}\psi dyds+\int_{0}^{\tau}\int(\dfrac{1}{2\tilde{V}^{1+\alpha}})_{\tau}\phi_{y}^{2}dyds-
\int_{0}^{\tau}\int(\dfrac{1}{\tilde{V}^{1+\alpha}})_{y}\psi_{y}\phi_{y}dyds\\[5mm]
\displaystyle-\int_{0}^{\tau}\int(\psi_{y}(\dfrac{1}{v^{1+\alpha}}-\dfrac{1}{\tilde{V}^{1+\alpha}}))_{y}\phi_{y}dyds-
\int_{0}^{\tau}\int Q_{1}\phi_{y}dyds\\[5mm]
=:\sum\limits_{i=10}^{14}I_{i}.
\end{array}
\end{equation}
\begin{equation}\label{3r}
\begin{array}{ll}
\displaystyle I_{10}=\int_{0}^{\tau}\int\psi_{y}^{2}dyds,
\end{array}
\end{equation}
\begin{equation}\label{3ttr}
\begin{array}{ll}
\displaystyle I_{11}\leq C\delta^{2}\int_{0}^{\tau}\int\phi_{y}^{2}dyds,
\end{array}
\end{equation}
\begin{equation}\label{3ttr}
\begin{array}{ll}
\displaystyle I_{12}
\leq C\delta^{2}\int_{0}^{\tau}\int\psi_{y}^{2}dyds+C\delta^{2}\int_{0}^{\tau}\int\phi_{y}^{2}dyds.

\end{array}
\end{equation}
and
\begin{equation}\label{3ttr}
\begin{array}{ll}
\displaystyle I_{13}\displaystyle=-\int_{0}^{\tau}\int\psi_{yy}(\dfrac{1}{v^{1+\alpha}}-\dfrac{1}{\tilde{V}^{1+\alpha}})\phi_{y}dyds
\displaystyle-\int_{0}^{\tau}\int\psi_{y}(\dfrac{1}{v^{1+\alpha}}-\dfrac{1}{\tilde{V}^{1+\alpha}})_{y}\phi_{y}dyds\\[5mm]
\displaystyle=-(1+\alpha)\int_{0}^{\tau}\int\frac{\bar{v}^{\alpha}}{(v\tilde{V})^{1+\alpha}}\phi\phi_{y}\psi_{yy}dyds-
(1+\alpha)\int_{0}^{\tau}\int(\frac{\bar{v}^{\alpha}\phi}{(v\tilde{V})^{1+\alpha}})_{y}\phi_{y}\psi_{y}dyd\\[5mm]
=:K_{4}+K_{5}
\end{array}
\end{equation}
Since
\begin{equation}\label{3t8tr}
\begin{array}{ll}
\displaystyle K_{4}\leq C\eta_{0}^{\frac{1}{2}}\int_{0}^{\tau}\|\phi_{y}\|^{2}ds+C\eta_{0}^{\frac{1}{2}}\int_{0}^{\tau}\|\psi_{yy}\|^{2}ds,
\end{array}
\end{equation}
and
\begin{equation}\label{32t8tr}
\begin{array}{ll}
\displaystyle K_{5}&\displaystyle\leq C\int_{0}^{\tau}\int|\psi_{y}\phi\phi_{y}^{2}|+|\tilde{V}_{y}\psi_{y}\phi\phi_{y}|+
|\psi_{y}\phi_{y}^{2}|dyds\\[5mm]
&\displaystyle \leq C\eta_{0}^{\frac{1}{2}}\int_{0}^{\tau}\|\psi_{y}\|_{L^{\infty}}\|\phi_{y}\|^{2}ds+
C\delta^{2}\eta_{0}^{\frac{1}{2}}\int_{0}^{\tau}\int|\psi_{y}\phi_{y}|dyds+C\int_{0}^{\tau}\|\psi_{y}\|_{L^{\infty}}\|\phi_{y}\|^{2}ds\\[5mm]
&\displaystyle\leq C\eta_{0}^{\frac{1}{2}}\int_{0}^{\tau}\|\psi_{y}\|^{\frac{1}{2}}\|\psi_{yy}\|^{\frac{1}{2}}
\|\phi_{y}\|^{\frac{1}{2}}\|\phi_{y}\|^{\frac{1}{2}}\|\phi_{y}\|ds+C\delta^{2}\eta_{0}^{\frac{1}{2}}\int_{0}^{\tau}
\|\psi_{y}\|^{2}ds+C\delta^{2}\eta_{0}^{\frac{1}{2}}\int_{0}^{\tau}\|\phi_{y}\|^{2}ds\\[5mm]
&\displaystyle+C\int_{0}^{\tau}\|\psi_{y}\|^{\frac{1}{2}}\|\psi_{yy}\|_{L^{2}}^{\frac{1}{2}}\|\phi_{y}\|^{\frac{1}{2}}
\|\phi_{y}\|^{\frac{1}{2}}\|\phi_{y}\|ds\\[5mm]
&\displaystyle \leq C\eta_{0}\int_{0}^{\tau}\|\psi_{y}\|\|\phi_{y}\|+\|\psi_{yy}\|\|\phi_{y}\|ds+
C\delta^{2}\eta_{0}^{\frac{1}{2}}\int_{0}^{\tau}\|\psi_{y}\|^{2}ds+
C\delta^{2}\eta_{0}^{\frac{1}{2}}\int_{0}^{\tau}\|\phi_{y}\|^{2}ds\\[5mm]
&\displaystyle+C\eta_{0}^{\frac{1}{2}}\int_{0}^{\tau}\|\psi_{y}\|\|\phi_{y}\|+\|\psi_{yy}\|\|\phi_{y}\|ds\\[5mm]
&\displaystyle \leq C\eta_{0}^{\frac{1}{2}}\int_{0}^{\tau}\|\psi_{y}\|^{2}+\|\phi_{y}\|^{2}+\|\psi_{yy}\|^{2}ds,
\end{array}
\end{equation}
thus, we have
\begin{equation}\label{3tttr}
\begin{array}{ll}
\displaystyle I_{13}\leq C\eta_{0}^{\frac{1}{2}}\int_{0}^{\tau}\|\psi_{y}\|^{2}+\|\phi_{y}\|^{2}+\|\psi_{yy}\|^{2}ds.
\end{array}
\end{equation}
The estimates of $I_{14}$ is
\begin{equation}\label{3ttr}
\begin{array}{ll}
\displaystyle I_{14}&\displaystyle=-\int_{0}^{\tau}\int(\tilde{U}_{y}(\dfrac{1}{v^{1+\alpha}}-\dfrac{1}{\tilde{V}^{1+\alpha}}))_{y}\phi_{y}dyds
\displaystyle+\int_{0}^{\tau}\int(p'(v)-p'(\tilde{V}))v_{y}\phi_{y}dyds\\[5mm]
&\displaystyle=-\int_{0}^{\tau}\int\tilde{U}_{yy}(\dfrac{1}{v^{1+\alpha}}-\dfrac{1}{\tilde{V}^{1+\alpha}})\phi_{y}dyds-\int_{0}^{\tau}\int\tilde{U}_{y}
(\dfrac{1}{v^{1+\alpha}}-\dfrac{1}{\tilde{V}^{1+\alpha}})_{y}\phi_{y}dyds\\[5mm]
&\displaystyle+\int_{0}^{\tau}\int p''(\eta)\phi(\phi_{y}+\tilde{V}_{y})\phi_{y}dyds\\[5mm]
&\displaystyle\leq C\delta^{2}\int_{0}^{\tau}\int e^{-c\delta|y-s\tau|}|\phi\phi_{y}|dyds+C\delta^{2}\int_{0}^{\tau}\int e^{-c\delta|y-s\tau|}
\big(|\phi\phi_{y}^{2}|+|\tilde{V}_{y}\phi\phi_{y}|+\phi_{y}^{2}\big)dyds\\[5mm]
&\displaystyle+C\int_{0}^{\tau}\int|\phi\phi_{y}^{2}|dyds+C\delta^{2}\int_{0}^{\tau}\int e^{-c\delta|y-s\tau|}|\phi\phi_{y}|dyds\\[5mm]
&\displaystyle\leq C\delta^{2}\int_{0}^{\tau}\int\phi^{2}dyds+C(\delta^{2}+\eta_{0}^{\frac{1}{2}})\int_{0}^{\tau}\int\phi_{y}^{2}dyds.
\end{array}
\end{equation}

Taking  $I_{10}$-$I_{14}$ into \eqref{36fr} and using Lemma 4.2, we get that
\begin{equation}\label{3t-0tr}
\begin{array}{ll}
\displaystyle\int(\dfrac{\phi_{y}^{2}}{2\tilde{V}^{1+\alpha}}-\phi_{y}\psi)dy+\int_{0}^{\tau}\int|p'(\tilde{V})|\phi_{y}^{2}dyds\\[5mm]
\displaystyle\leq C(1+\delta^{2}+\eta_{0}^{\frac{1}{2}})\int_{0}^{\tau}\int\psi_{y}^{2}dyds
\displaystyle +C(\delta^{2}+\eta_{0}^{\frac{1}{2}})\int_{0}^{\tau}\int\phi_{y}^{2}dyds+C\eta_{0}^{\frac{1}{2}}\int_{0}^{\tau}\int\psi_{yy}^{2}dyds+
C\delta^{3}.
\end{array}
\end{equation}

Using $\epsilon$-Young inequality and \eqref{3r2=2f}, we have
\begin{equation}\label{3hhr2=2f}
\begin{array}{ll}
\displaystyle\int\phi_{y}\psi dy\displaystyle\leq\epsilon\|\phi_{y}\|^{2}+C\|\psi\|^{2}
\displaystyle\leq \epsilon\|\phi_{y}\|^{2}+C\delta^{\frac{1}{3}}+\epsilon\int_{0}^{\tau}\|\phi_{y}\|^{2}ds,
\end{array}
\end{equation}
Thus,
\begin{equation}\label{3t-110tr}
\begin{array}{ll}
\displaystyle\|\ \phi_{y}\|^{2}+\int_{0}^{\tau}\| \phi_{y}\|^{2}dyds\\[5mm]
\displaystyle\leq C\delta^{\frac{1}{3}}+C(1+\delta^{2}+\eta_{0}^{\frac{1}{2}})\int_{0}^{\tau}\int\psi_{y}^{2}dyds+
C\eta_{0}^{\frac{1}{2}}\int_{0}^{\tau}\int\psi_{yy}^{2}dyds.
\end{array}
\end{equation}
Combining  \eqref{3t-110tr} with the above estimates, we can obtain
\begin{equation}\label{31rf}
\begin{array}{ll}
\displaystyle\|\phi\|_{H^{1}}^{2}+\|\psi\|^{2}+\int_{0}^{\tau}\| \phi_{y}\|^{2}ds+\int_{0}^{\tau}\|\psi_{y}\|^{2}ds
\displaystyle\leq C\delta^{\frac{1}{3}}+C\eta_{0}^{\frac{1}{2}}\int_{0}^{\tau}\| \psi_{yy}\|^{2}ds.
\end{array}
\end{equation}
The proof of  Lemma 4.3 is completed.
\end{proof}

\begin{lemma}
If $\delta_{0}$ and $\eta_{0}$ are suitable small, for  $0\leq \tau\leq T$, it holds for $\delta\leq\delta_{0}$ that
\begin{equation}\label{30090.5}
\begin{array}{ll}
\displaystyle \|\psi_{y}\|^{2}+\int_{0}^{\tau}\|\psi_{yy}\|^{2}ds
\displaystyle \leq C\delta+C(1+\delta^{2})\int_{0}^{\tau}\|\phi_{y}\|^{2}ds+C\delta^{2}\int_{0}^{\tau}\|\psi_{y}\|^{2}ds.
\end{array}
\end{equation}
\end{lemma}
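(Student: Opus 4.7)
The plan is to derive a one-higher-order energy identity by multiplying the second equation of \eqref{3=-0-09} by $-\psi_{yy}$ and integrating on $R\times[0,\tau]$. After integration by parts in $y$ and use of the well-prepared initial data \eqref{38.5} (so that $\psi_y(\cdot,0)=0$), the time-derivative term gives $\tfrac12\|\psi_y(\tau)\|^2$, while expanding the quasilinear diffusion as $(\psi_y/v^{1+\alpha})_y=\psi_{yy}/v^{1+\alpha}-(1+\alpha)(v_y/v^{2+\alpha})\psi_y$ isolates the target dissipation $\int_0^\tau\!\!\int \psi_{yy}^2/v^{1+\alpha}\,dy\,ds$, which is equivalent to $\int_0^\tau\|\psi_{yy}\|^2\,ds$ by the pointwise bound (4.1) on $v$.

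All other contributions are moved to the right side and bounded. The linear cross term $\int_0^\tau\!\!\int p'(\tilde V)\phi_y\psi_{yy}$ is controlled by Cauchy--Schwarz as $\epsilon\int_0^\tau\|\psi_{yy}\|^2ds+C_\epsilon\int_0^\tau\|\phi_y\|^2ds$, which will be the source of the $(1+\delta^2)\int\|\phi_y\|^2ds$ on the right of \eqref{30090.5}. The lower-order quasilinear remainder $\int (1+\alpha)(v_y/v^{2+\alpha})\psi_y\psi_{yy}$ splits via $v_y=\tilde V_y+\phi_y$: the $\tilde V_y$ piece, combined with the bound $|\tilde V_y|\le C\delta^2 e^{-c\delta|y-s\tau|}$ from Proposition 2.1 and $\epsilon$-Young, gives $C\delta^2\int_0^\tau\|\psi_y\|^2ds$; the $\phi_y$ piece, treated by the Sobolev interpolation $\|\phi_y\|_{L^\infty}\le C\|\phi_y\|^{1/2}\|\phi_{yy}\|^{1/2}$ and the a priori assumption \eqref{387.125}, yields $C\eta_0^{1/2}\int_0^\tau(\|\phi_y\|^2+\|\psi_{yy}\|^2)ds$, which is partly absorbable on the left and partly subsumed by the $\int\|\phi_y\|^2ds$ term on the right. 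The $Q_1$ term defined in \eqref{3==} decomposes exactly as in the estimates of $I_{13}$--$I_{14}$ of Lemma 4.3, using $|v^{-(1+\alpha)}-\tilde V^{-(1+\alpha)}|\le C|\phi|$, $|p'(v)-p'(\tilde V)|\le C|\phi|$, and the pointwise estimates \eqref{v13}--\eqref{vsdaa13}: each of its three subpieces produces at worst $\delta^2\int(\|\phi_y\|^2+\|\psi_y\|^2)ds$, absorbable $\eta_0^{1/2}\int\|\psi_{yy}\|^2ds$, and a pure $C\delta$ term arising, as in $I_3$ of Lemma 4.1, from estimates of the form $\delta^2\int_0^\tau e^{-c\delta|s|}ds\le C\delta$.

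The proof finishes with the standard absorption step: after choosing $\epsilon$ small and then $\eta_0,\delta_0$ sufficiently small, all $\epsilon\|\psi_{yy}\|^2$ and $\eta_0^{1/2}\|\psi_{yy}\|^2$ contributions move to the left, producing exactly \eqref{30090.5}. The main obstacle will be the bookkeeping of the cubic-type triple products in $Q_1$ of the form $\phi\cdot\phi_y\cdot\psi_{yy}$ and in the quasilinear remainder of the form $\phi_y\cdot\psi_y\cdot\psi_{yy}$; closing each of these with a coefficient no larger than $\eta_0^{1/2}$ requires a careful interpolation chain that combines the $H^1$ control of $\phi$ and $\psi_y$ furnished by Lemmas 4.2--4.3 with the dissipation $\|\psi_{yy}\|^2$ itself, rather than a naive Cauchy--Schwarz split which would leave a non-absorbable constant in front of $\int\|\psi_{yy}\|^2ds$.
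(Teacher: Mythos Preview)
Your overall strategy---multiply $\eqref{3=-0-09}_2$ by $-\psi_{yy}$, integrate, isolate $\int_0^\tau\!\int v^{-(1+\alpha)}\psi_{yy}^2$, and absorb the small cross terms---is exactly the paper's approach. There is, however, one concrete gap in how you handle the cubic remainder $\int_0^\tau\!\int |\phi_y\,\psi_y\,\psi_{yy}|\,dy\,ds$.

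You propose to pull out $\|\phi_y\|_{L^\infty}$ and then use $\|\phi_y\|_{L^\infty}\le C\|\phi_y\|^{1/2}\|\phi_{yy}\|^{1/2}$ together with the a priori assumption \eqref{387.125}. This fails: the a priori assumption controls only $\|(\Phi,\Psi)\|_{H^2}$, so it bounds $\|\phi\|=\|\Phi_y\|$ and $\|\phi_y\|=\|\Phi_{yy}\|$, but \emph{not} $\|\phi_{yy}\|=\|\Phi_{yyy}\|$; indeed, no estimate on $\|\phi_{yy}\|$ appears anywhere in the scheme (Proposition~3.2 only produces $\int_0^\tau\|\Phi_y\|_{H^1}^2\,ds$). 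The paper avoids this by putting the $L^\infty$ on the other factor: take out $\|\psi_y\|_{L^\infty}\le C\|\psi_y\|^{1/2}\|\psi_{yy}\|^{1/2}$, then use $\|\phi_y\|\le\eta_0^{1/2}$ from \eqref{387.125}, so that
\[
\int_0^\tau\!\!\int|\phi_y\psi_y\psi_{yy}|\,dy\,ds
\le C\int_0^\tau\|\psi_y\|^{1/2}\|\psi_{yy}\|^{1/2}\|\phi_y\|\|\psi_{yy}\|\,ds
\le C\eta_0^{1/2}\int_0^\tau\big(\|\psi_y\|^2+\|\psi_{yy}\|^2\big)\,ds,
\]
which is absorbable. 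The same correction applies to the analogous cubic term $\int|\phi\,\phi_y\,\psi_{yy}|$ inside $Q_1$: there one pulls out $\|\phi\|_{L^\infty}\le C\|\phi\|^{1/2}\|\phi_y\|^{1/2}\le C\eta_0^{1/2}$, which \emph{is} available.

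A secondary point: your explanation of where the bare $C\delta$ on the right of \eqref{30090.5} comes from (``$\delta^2\int_0^\tau e^{-c\delta|s|}\,ds\le C\delta$, as in $I_3$'') does not match how the paper actually closes this term. In the paper's treatment of $\int Q_1\psi_{yy}$ the residual contribution is $C\delta^2\int_0^\tau\|\phi\|^2\,ds=C\delta^2\int_0^\tau\|\Phi_y\|^2\,ds$, and this is bounded by $C\delta$ (plus an absorbable $C\eta_0^{1/2}\int_0^\tau\|\psi_y\|^2\,ds$) by invoking Lemma~4.2. The exponential weight in the profile bounds decays only in $|y-s\tau|$, so a direct double integral of it is not finite; the route through the already-established time-integrated control of $\|\Phi_y\|^2$ is what makes the argument close.
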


\begin{proof}
Multiplying $\eqref{3=-0-09}_{2}$ by $-\psi_{yy}$ and integrating the resulting equation over $[0, \tau]\times R$, we have
\begin{equation}\label{3kk.5}
\begin{array}{ll}
\displaystyle 0=\int_{0}^{\tau}\int\psi_{\tau}\psi_{yy}dyds+\int_{0}^{\tau}\int p'(\tilde{V})\phi_{y}\psi_{yy}dyds-
\int_{0}^{\tau}\int(\dfrac{\psi_{y}}{v^{1+\alpha}})_{y}\psi_{yy}dyds-\int_{0}^{\tau}\int Q_{1}\psi_{yy}dyds\\[5mm]
\displaystyle=-\frac{1}{2}\int\psi_{y}^{2}dy+\int_{0}^{\tau}\int p'(\tilde{V})\phi_{y}\psi_{yy}dyds
-\int_{0}^{\tau}\int\dfrac{1}{v^{1+\alpha}}\psi_{yy}^{2}dyds-\int_{0}^{\tau}\int(\dfrac{1}{v^{1+\alpha}})_{y}\psi_{y}\psi_{yy}dyds\\[5mm]
\displaystyle-\int_{0}^{\tau}\int Q_{1}\psi_{yy}dyds,
\end{array}
\end{equation}
i.e.
\begin{equation}\label{3kek.5}
\begin{array}{ll}
\displaystyle\frac{1}{2}\int\psi_{y}^{2}dy+\int_{0}^{\tau}\int\dfrac{1}{v^{1+\alpha}}\psi_{yy}^{2}dyds\\[5mm]
\displaystyle=\int_{0}^{\tau}\int p'(\tilde{V})\phi_{y}\psi_{yy}dyds-\int_{0}^{\tau}\int(\dfrac{1}{v^{1+\alpha}})_{y}\psi_{y}\psi_{yy}dyds-\int_{0}^{\tau}\int Q_{1}\psi_{yy}dyds.
\end{array}
\end{equation}

Applying $\epsilon$-Young inequality and Proposition 2.1, we get
\begin{equation}\label{3wkek.5}
\begin{array}{ll}
\displaystyle\int_{0}^{\tau}\int p'(\tilde{V})\phi_{y}\psi_{yy}dyds
\leq\epsilon\int_{0}^{\tau}\int\psi_{yy}^{2}dyds+C\int_{0}^{\tau}\int\phi_{y}^{2}dyds.
\end{array}
\end{equation}

\begin{equation}\label{31kek.5}
\begin{array}{ll}
\displaystyle\int_{0}^{\tau}\int(\dfrac{1}{v^{1+\alpha}})_{y}\psi_{y}\psi_{yy}dyds\\[5mm]
\displaystyle\leq C\int_{0}^{\tau}\int|(\phi_{y}+\tilde{V}_{y})\psi_{y}\psi_{yy}|dyds\\[5mm]
\displaystyle\leq C\int_{0}^{\tau}\int|\phi_{y}\psi_{y}\psi_{yy}|dyds+C\int_{0}^{\tau}\int|\tilde{V}_{y}\psi_{y}\psi_{yy}|dyds\\[5mm]
\displaystyle\leq C\int_{0}^{\tau}\|\psi_{y}\|_{L^{\infty}}\int|\phi_{y}\psi_{yy}|dyds+C\delta^{2}\int_{0}^{\tau}\int|\psi_{y}\psi_{yy}|dyds\\[5mm]
\displaystyle\leq C\int_{0}^{\tau}\|\psi_{y}\|^{\frac{1}{2}}\|\psi_{yy}\|^{\frac{1}{2}}\|\phi_{y}\|\|\psi_{yy}\|dyds+
C\delta^{2}\int_{0}^{\tau}\int|\psi_{y}\psi_{yy}|dyds\\[5mm]
\displaystyle\leq C{\eta_{0}}^{\frac{1}{2}}\int_{0}^{\tau}\|\psi_{y}\|\|\psi_{yy}\|dyds+
C{\eta_{0}}^{\frac{1}{2}}\int_{0}^{\tau}\|\psi_{yy}\|^{2}dyds+C\delta^{2}\int_{0}^{\tau}\|\psi_{y}\|^{2}dyds\\[5mm]
\displaystyle+C\delta^{2}\int_{0}^{\tau}\|\psi_{yy}\|^{2}dyds\\[5mm]
\displaystyle\leq C({\eta_{0}}^{\frac{1}{2}}+\delta^{2})\int_{0}^{\tau}\|\psi_{y}\|^{2}dyds+
C({\eta_{0}}^{\frac{1}{2}}+\delta^{2})\int_{0}^{\tau}\|\psi_{yy}\|^{2}dyds,
\end{array}
\end{equation}
and the last term of \eqref{3kek.5} is estimated as follow,

\begin{equation}\label{3w-kek.5}
\begin{array}{ll}
\displaystyle\int_{0}^{\tau}\int  Q_{1}\psi_{yy}dyds\\[5mm]
\displaystyle=\int_{0}^{\tau}\int(\tilde{U}_{y}(\dfrac{1}{v^{1+\alpha}}-\dfrac{1}{\tilde{V}^{1+\alpha}}))_{y}\psi_{yy}dyds
-\int_{0}^{\tau}\int(p'(v)-p'(\tilde{V}))v_{y}\psi_{yy}dyds\\[5mm]
\displaystyle=-\int_{0}^{\tau}\int\tilde{U}_{yy}(\dfrac{1}{v^{1+\alpha}}-\dfrac{1}{\tilde{V}^{1+\alpha}})\psi_{yy}dyds-\int_{0}^{\tau}\int\tilde{U}_{y}
(\dfrac{1}{v^{1+\alpha}}-\dfrac{1}{\tilde{V}^{1+\alpha}})_{y}\psi_{yy}dyds\\[5mm]
\displaystyle-\int_{0}^{\tau}\int(p'(v)-p'(\tilde{V}))v_{y}\psi_{yy}dyds\\[5mm]
\displaystyle\leq C\delta^{2}\int_{0}^{\tau}\int e^{-c\delta|y-s\tau|}|\phi\psi_{yy}|dyds+C\delta^{2}\int_{0}^{\tau}\int e^{-c\delta|y-s\tau|}
\big(|\phi\phi_{y}\psi_{yy}|+|\tilde{V}_{y}\phi\psi_{yy}|\\[5mm]+\phi_{y}\psi_{yy}\big)dyds
\displaystyle+C\int_{0}^{\tau}\int|\phi\phi_{y}\psi_{yy}|dyds+C\delta^{2}\int_{0}^{\tau}\int e^{-c\delta|y-s\tau|}|\phi\psi_{yy}|dyds\\[5mm]
\displaystyle\leq C\delta^{2}\int_{0}^{\tau}\int(\phi^{2}+\phi_{y}^{2}+\psi_{yy}^{2})dyds\\[5mm]
\displaystyle\leq C\delta+ C\delta^{2}\int_{0}^{\tau}\int(\phi_{y}^{2}+\psi_{yy}^{2})dyds+
C\eta_{0}^{\frac{1}{2}}\int_{0}^{\tau}\int\psi_{y}^{2}dyds.
\end{array}
\end{equation}
Consequently, due to the smallness of $\epsilon, \delta$, we can obtain \eqref{30090.5}. The proof of Lemma 4.4 is completed.
\end{proof}

$\mathbf{Proof \ of \ Proposition \ 3.2.}$ Taking $\eta_{0}=\delta^{\frac{1}{4}}$, applying the smallness of $\delta$ and Lemma 4.1-Lemma 4.4, we can close a priori assumption \eqref{387.125} and then complete the proof of Proposition 3.2.

$\mathbf{Proof \ of \ Theorem \ 1.1. }$ Theorems 3.1 gives that there exists a sequence of smooth solutions $(v^{\alpha}, u^{\alpha})(t)$ to the Navier-Stokes system \eqref{11.1} with the well-prepared initial data \eqref{38.5} for all $t>0$. From \eqref{3ii9.5}, we have
\begin{equation}\label{3900.5}
\int_{0}^{\infty}\|(\Phi_{y},\Psi_{y})(\tau)\|^{2}d\tau+\int_{0}^{\infty}\big|\frac{d}{d\tau}\|(\Phi_{y},\Psi_{y})(\tau)\|^{2}\big|d\tau
<\infty,
\end{equation}
which implies  that there exists a sequence $\{\tau_{n}\}, n=1,2,3,...$ with
$\{\tau_{n}\}\rightarrow+\infty$ as $n\rightarrow+\infty$ such that
\begin{equation}\label{39tr00.5}
\|(\Phi_{y},\Psi_{y})(\tau_{n})\|^{2}\rightarrow0,\ \ \ \ as  \ \  n\rightarrow+\infty,
\end{equation}
and
$\lim\limits_{\tau\rightarrow+\infty}\|(\Phi_{y},\Psi_{y})(\tau)\|$ exists.  Consequently,
\begin{equation}\label{3or00.5}
\|(\Phi_{y},\Psi_{y})(\tau)\|^{2}\rightarrow 0, \ \ \ \ as\ \ \tau\rightarrow+\infty.
\end{equation}

Applying \eqref{3ii9.5}, \eqref{3or00.5} and Sobolev's interpolation inequality lead to
\begin{equation}\label{3=98ee090.5}
\begin{array}{ll}
\sup\limits_{y\in R}|(\Phi_{y},\Psi_{y})(y,\tau)|&\leq C\|(\Phi_{y},\Psi_{y})(\tau)\|^{\frac{1}{2}}\|(\Phi_{yy},\Psi_{yy})(\tau)\|^{\frac{1}{2}}\\[5mm]
&\leq C\|(\Phi_{y},\Psi_{y})(\tau)\|^{\frac{1}{2}}\rightarrow0,\ \ \ as\ \ \tau\rightarrow+\infty.
\end{array}
\end{equation}

Therefore, we can easily obtain the convergence \eqref{v3} from \eqref{3=98ee090.5} and Proposition 2.1. The proof of Theorem 1.1 is completed.

\end{document}